\UseRawInputEncoding 
\documentclass[11pt,onecolumn]{amsart}
\usepackage[pagewise]{lineno}
\newtheorem{theorem}{Theorem}[section]
\usepackage[left=1.5cm,top=1.5cm,bottom=1.5cm,right=1.5cm]{geometry}
\usepackage{latexsym}
\usepackage{amssymb,amsmath,amsfonts}

\newtheorem{lemma}[theorem]{Lemma}
\newtheorem{corollary}[theorem]{Corollary}

\theoremstyle{definition}
\newtheorem{definition}[theorem]{Definition}
\newtheorem{conclusion}[theorem]{Conclusion}

\newtheorem{example}[theorem]{Example}
\newtheorem{remark}[theorem]{Remark}
\newcommand{\ab}{|}

\newcommand{\R}{\mathbb R}

\newcommand{\N}{\mathbb N}

\newcommand{\Fcal}{\mathcal{F}}
\setlength{\parindent}{0pt}
\setlength{\parskip}{5pt plus 2pt minus 1 pt}
\numberwithin{equation}{section}
 \global\parskip 6pt \oddsidemargin
0.1cm \evensidemargin 0.4cm \headheight 0.1cm \topmargin -2.0cm
\textwidth 17.10cm \textheight 24.0cm \voffset 1.50cm \topskip 1.0cm
\begin{document}

\pagenumbering{arabic}
  \title[On generalized $\Phi$-strongly monotone mappings and algorithms]
 {On generalized $\Phi$-strongly monotone mappings and algorithms for the solution of equations of Hammerstein type}
\author{ M.O. Aibinu$^{*}$ and O.T. Mewomo}
 \email{{\bf Corresponding author:}  $^*$moaibinu@yahoo.com/ mathewa@dut.ac.za}

 \keywords{Generalized ${\Phi}$-strongly monotone, Hammerstein equation, Strong convergence.\\
{\rm } {\it Mathematics Subject Classification}: 47H06, 47J05, 47J25, 47H09.\\
M.O. Aibinu, O.T. Mewomo, On generalized $\Phi$-strongly monotone mappings and algorithms for the solution of equations of Hammerstein type, International Journal of Nonlinear Analysis and Applications, https://doi.org/10.22075/ijnaa.2019.16797.1894.}

\begin{abstract}
In this paper, we consider the class of generalized $\Phi$-strongly monotone mappings and the methods of approximating a solution of equations of Hammerstein type. Auxiliary mapping is defined for nonlinear integral equations of Hammerstein type. The auxiliary mapping is the composition of bounded generalized $\Phi$-strongly monotone mappings which satisfy the range condition. Suitable conditions are imposed to obtain the boundedness and to show that the auxiliary mapping is a generalized $\Phi$-strongly which satisfies the range condition.  A sequence is constructed and it is shown that it converges strongly to a solution of equations of Hammerstein type. The results in this paper improve and extend some recent corresponding results on the approximation of a solution of equations of Hammerstein type.
\end{abstract}
 
\maketitle
\section{Introduction}
Let $E$ be a real normed linear space and $E^*$ denotes its corresponding dual space.  We denote the value of the functional $x^* \in E^*$ at $x \in E$ by $\left\langle x^*, x \right\rangle,$ domain of $A$ by $D(A),$ range of $A$ by $R(A)$ and $N(A)$ denotes the set of zeros of $A \left(\mbox{i.e.,} \ N(A) = \left\{x \in D(A) : 0 \in Ax\right\} = A^{-1}0\right).$   A multivalued mapping $A : E \rightarrow 2^{E^*}$ from $E$ into $2^{E^*}$ is said to be monotone if for each $x, y \in E$, the following inequality holds:
 $$\left\langle \mu -\nu, x-y\right\rangle \geq 0 \ \ \forall \ \ \mu \in Ax,\ \ \nu\in Ay.$$
A single-valued mapping $A:D(A)\subset E\rightarrow E^*$ is monotone if ${\left\langle Ax - Ay, x - y \right\rangle}\geq 0,~\forall~x,y\in D(A).$  For a linear mapping $A$, the above definition reduces to $\left\langle Au, u \right\rangle \geq 0 ~\forall ~u \in D(A).$  Multivalued mapping $A$ is said to be generalized $\Phi$-strongly monotone if there exists a strictly increasing function $\Phi: [0, \infty) \rightarrow [0, \infty)$ with $\Phi(0)=0$ such that for each $x, y \in D(A),$
$$\left\langle \mu -\nu, x-y\right\rangle \geq \Phi(\|x-y \|) \ \ \forall \ \ \mu \in Ax,\ \ \nu \in Ay.$$
Given that $H$ is a real Hilbert space, a mapping $A: H\rightarrow2^{H}$ is said to be monotone if for each $x, y\in H,$
$$\left\langle \mu -\nu, x-y\right\rangle \geq 0 \ \ \forall \ \ \mu \in Ax,\ \ \nu\in Ay.$$ 
Let $A$ be a monotone mapping defined on $H.$ It is well known (see e.g., Zeidler \cite{e1}) that many physically significant problems can be modelled by initial-value problems of the form 
\begin{equation}\label{e1}
u'(t) + Au(t) = 0, u(0) = u_{0}.
\end{equation}
 Heat, wave and Schr$\ddot{o}$dinger equations are typical examples where such evolution equations occur. At an equilibrium state (that is, if $u(t)$ is independent of $t$), then (\ref{e1}) reduces to
\begin{equation}\label{e2}
 Au = 0.
\end{equation}
 Therefore, considerable research efforts have been devoted, especially within the past 40 years or so, to methods of finding approximate solutions (when they exist) of (\ref{e2}).  One important generalization of (\ref{e2}) is the so-called equation of Hammerstein type (see, e.g., Hammerstein \cite{e2}), where a nonlinear integral equation of Hammerstein type is one of the form
\begin{equation}\label{e4}
u(x) + \int_{\Omega}k(x,y)f(y, u(y))dy = h(x),
\end{equation}
where $dy$ stands for a $\sigma$-finite measure on the measure space $\Omega$,
the kernel $k$ is defined on $\Omega \times \Omega$, $f$ is a real-valued function defined on $\Omega \times \R$ and is in general nonlinear, $h$ is a given function on $\Omega$ and $u$ is the unknown function defined on $\Omega$. 
Let $g$ be a function from $\Omega \times {\R}^n$ into $\R$. We denote by ${\Fcal}(X,Y)$, the set of all maps from $X$ to $Y$.  The Nemystkii operator associated to $g$ is the operator $N_g:{\Fcal}(\Omega,{\R}^n)\rightarrow {\Fcal}(\Omega,\R)$ defined by $$ u\mapsto N_g(u)$$ where $(N_gu)(x)=g\left(x,u(x)\right)~\forall~u \in {\Fcal}\left(\Omega,~{\R}^n \right),~\forall~x\in \Omega.$
 For simplicity, we shall write $N_gu(x)$ instead of $(N_gu)(x)$.
 \begin{example}
 Given a map $g : \R \times \R \rightarrow \R$ defined by $$g(x,s)=\ab s \ab ~\forall ~(x,s)\in \R \times \R,$$
 the Nemystkii operator associated to $g$ is the expression 
 $N_gu(x)=\ab u(x)\ab$ for any map $u:\R\rightarrow \R$ and for any $x\in \R$.
 \end{example}
\begin{example}
Given a map $g : \R \times \R \rightarrow \R$ defined by $$g(x,s)=xe^s ~\forall~ (x,s)\in \R \times \R,$$
the Nemystkii operator associated to $g$ is the expression 
$N_gu(x)=xe^{u(x)}$ for any map $u:\R\rightarrow \R$ and for any $x\in \R$.
\end{example}
\par Observe that by the continuity of $g$, $N_g$ maps the set of real-valued continuous function on $\Omega;$ $C(\Omega)$ into itself.  Moreover, it maps the set of real-valued measurable function into itself. Define the operator $K: {\Fcal}(\Omega, \R)\rightarrow {\Fcal}(\Omega, \R)$  by
$$Kv(x)=\int_{\Omega}k(x,y)v(y)dy \ \mbox{for almost all} \ x \in \Omega, $$
and the Nemystkii operator $F: {\Fcal}(\Omega, \R) \rightarrow {\Fcal}(\Omega, \R)$ associated with $f$ by 
$$Fu(x) = f(x, u(x)) \ \mbox{for almost all} \ x\in \Omega,$$
then the integral (\ref{e4}) can be put in functional equation form as follows:
\begin{equation}\label{e5}
u+KFu=0,
\end{equation}
where without loss of generality, we have taken $h\equiv 0$. Also, Hammerstein equations play crucial roles in solving several problems that arise in differential equations (see, e.g., Pascali and Sburlan \cite{e4}, Chapter IV, p. $164$) and applicable in theory of optimal control systems and in automation and network theory (see, e.g.,  Dolezale \cite{b14}).  Several authors have proved existence and uniqueness theorems for equations of the Hammerstein type (see, e.g., Br$\acute{e}$zis and  F. E. Browder (\cite{b15, b16, b22}); Browder and Gupta \cite{b38}; Chepanovich \cite{b19}; De Figueiredo and Gupta \cite{b20}).
\par Let $C$ be a nonempty closed convex subset of a real Banach space $E.$  A self-mapping $T : C\rightarrow C$ is said to be nonexpansive if $\|Tx-Ty\|\leq \|x-y\|~~\forall~~x,y\in C.$ If $E$ is smooth, $T: C \rightarrow E$ is said to be firmly nonexpansive type (see e.g., \cite{b32}), if 
$$\left\langle Tx-Ty, JTx-JTy\right\rangle \leq \left\langle Tx-Ty, Jx-Jy\right\rangle  \  \mbox{for all} \  x, y\in C,$$
where $J: E\rightarrow 2^{E^*}$  is the normalized duality mapping defined in Section 2.
 
\par For the iterative approximation of solutions of (\ref{e2}), the monotonicity of $A$ is
crucial. A mapping $A: E\rightarrow 2^{E^*}$ is said to be maximal monotone if it is monotone and $R(J+tA)$ is all of $E^*$ for some $t>0.$  Given that $A$ is monotone and $R(J+tA)=E^*$  for all $t>0,$ then $A$ is said to satisfy the range condition. Let $E$ be a uniformly smooth and uniformly convex Banach space and $A,$ a maximal monotone or (a monotone mapping which satisfies the range condition). Then, one can define for all $t>0,$ the resolvent $J_t:C\rightarrow D(A)$ by
$$J_tx=\left\{z\in E: Jx\in Jz+tAz\right\}$$
for all $x\in C,$ where $C$ is a closed convex subset of $E.$ The fact that $F(J_t)=A^{-1}0$ is well known where $F(J_t)$ is the set of fixed points of $J_t$ (see e.g., \cite{e35, b2, e31}). There exists some interesting reports on the class of monotone mappings (See e.g, \cite{ b9, aibinu, Chidume1, Djitte1, Yekini2}).
 
\par In this present work, it is shown that if $A$ is a multivalued generalized $\Phi$-strongly monotone mapping and such that $R(J_p+t_0A) = E^*$ for some $t_0 > 0,$ then $R(J_p+tA) = E^*$ for all $t > 0,$ where  $J_{p},~ p >1$ is the generalized duality mapping. That is, a maximal monotone mapping satisfies the range condition. Also, a strong convergence theorem for approximating a solution of equations of Hammerstein type is established. We consider the generalized $\Phi$-strongly monotone mapping which is the largest such that if a solution of the equation $0\in Ax$ exists, it is necessarily unique. Our results generalize and improve some important and recent results of Chidume and Idu \cite{b28}.

\section{Preliminaries}
Let $S := \left\{x \in E : \|x \| = 1\right\}$ denotes a unit sphere of a Banach space $E$ with dimension greater than or equal to two. The space $E$ is said to be G$\hat{a}$teaux differentiable (or is smooth) if the limit
$$\displaystyle \lim_{t\rightarrow 0}\frac{\|x +ty \|-\|x \|}{t}$$
exists for each $x, y \in S.$ If $E$ is smooth and the limit is attained uniformly for each $x, y \in S,$ then it is said to be uniformly smooth. A Banach space $E$ is said to be strictly convex if
$$\|x \|=\|y \|=1, x\neq y\Rightarrow \frac{\|x + y \|}{2}<1.$$
The space $E$ is said to be uniformly convex if, for each $\epsilon \in(0, 2],$ there exists a $\delta:= \delta(\epsilon) > 0$ such that for each $x, y \in S, ~\|x - y\|\geq \delta$ implies that $\frac{\|x-y\|}{2}\leq1-\delta.$ $E$ is reflexive if and only if the natural embedding of $E$ into $E^{**}$ is onto. It is known that a uniformly convex Banach space is reflexive and strictly convex. Also, if $E$ is a reflexive Banach space, then, it is strictly convex (respectively smooth) if and only if $E^*$ is smooth (respectively strictly convex).

\par Let $\varphi : [0, \infty) \rightarrow [0, \infty)$ be a strictly increasing continuous function such that $\varphi(0)= 0$ and $\varphi(t) \rightarrow \infty$ as $t \rightarrow \infty, ~\varphi$ is called a gauge function.  We associate to $\varphi,$ the duality mapping  $J_{\varphi}: E\rightarrow 2^{E^*}$ which is defined as
$$J_{\varphi}(x) =\left\{ f \in E^* :\left\langle x, f \right\rangle = \|x\|\| f\|, ~\| f\|=\varphi(\| x\|) \right\},$$
where $E^*$ denotes the dual space of $E$ and $\left\langle . , .\right\rangle$ denotes the generalized duality pairing. If $\varphi(t)=t^{p-1}, ~p>1,$ the duality mapping $J_{\varphi}=J_p$ is called generalized duality mapping. The duality mapping with guage $\varphi(t)=t$ (i.e. $p=2$) is denoted by $J$ and is referred to as the normalized duality mapping. It follows from the definition that $J_{\varphi}(x)=\frac{\varphi(\|x\|)}{\|x\|}J(x)$ for each $x\neq0$ and $J_p(x)={\|x\|}^{p-2}J(x), ~p>1.$  $J_{\varphi}$ is single-valued if $E$ is smooth and if $E$ is a reflexive strictly convex Banach space with strictly convex dual space $E^*,$ $J_{p} : E \rightarrow E^*$ and $J_{q} : E^* \rightarrow E$ being the duality mappings with gauge functions $\varphi(t)=t^{p-1}$ and $\varphi(s)=s^{q-1},$ $\frac{1}{p}+\frac{1}{q}=1,$ respectively, then $J_{p}^{-1}=J_{q}$.  For a Banach space $E$ and $E^*$ as its dual space, the following properties of the generalized duality mapping have also been established (see e.g., Alber and Ryazantseva \cite{e7}, Cioranescu \cite{b3}, p. 25-77, Xu and Roach \cite{b7}, Z$\check{a}$linescu  \cite{b37}): 
 \begin{itemize}\label{r1}
	\item [(i)]  If $E$ is smooth, then $J_p$ is single-valued and norm-to-weak$^*$ continuous;
	\item [(ii)] If $E$ is strictly convex, then $J_p$ is strictly monotone (injective, in particular, i.e, if $x \neq y,$ then $J_px \cap J_py =\emptyset$);
	\item [(iii)] If $E$ is reflexive, then $J_p$ is onto;
		\item [(iv)]The expression $\left\langle J_px, x\right\rangle$ is naturally regarded as having power $p$ as $\left\langle J_px, x\right\rangle={\|x\|}^p;$
	\item [(v)] If $E$ is uniformly smooth, then $J_q : E^* \rightarrow E$ is a generalized duality mapping on $E^*$, $J^{-1}_p = J_q$, $J_pJ_q = I_{E^*}$ and $J_qJ _p= I_{E}$, where $I_{E}$ and $I_{E^*}$ are the identity mappings on $E$ and $E^*$ respectively. 
	\end{itemize}
	\begin{definition}\label{d1}
Let $E$ be a smooth real Banach space with dual space $E^*,$ the followings were introduced by Aibinu and Mewomo \cite{b4}.
\begin{itemize}
\item[(i)] The function ${\phi}_p: E \times E \rightarrow \R$ is defined by
$${\phi}_p(x,y) =  \frac{p}{q}{\|x \|}^q - p\left\langle x, J_py \right\rangle + {\|y \|}^p , \ \mbox{for all} \ x,y \in E,$$
where $J_p$ is the generalized duality map from $E$ to $E^*$,  $p$ and $q$ are real numbers such that $q\geq p>1$ and $\frac{1}{p} + \frac{1}{q} =1$. Notice that taking $p=2$ in (\ref{e11}), it reduces to

$${\phi} (x,y)= {\|x \|}^2 - 2\left\langle x, Jy \right\rangle + {\|y \|}^2, \ \mbox{for all} \ x,y \in E,$$

which was introduced by Alber \cite{b1}.
\item[(ii)]The mapping $V_p: E\times E^* \rightarrow \R $ is defined by
$$V_p (x,x^*)= \frac{p}{q}{\|x \|}^q - p\left\langle x, x^* \right\rangle + {\|x^* \|}^p  ~~ \forall ~~ x \in E, x^*\in E^*  \ \mbox{such that} \ q\geq p>1,\ \ \frac{1}{p} + \frac{1}{q} =1.$$
\end{itemize}	
\end{definition}

\begin{remark}
These remarks follow from Definition \ref{d1}:
\begin{itemize}
\item [(i)] It is obvious from the definition of the function ${\phi}_p$ that
\begin{equation}\label{e11}
(\|x\|- \|y \|)^p \leq {\phi}_p(x,y) \leq (\|x\| + \|y \|)^p  \ \mbox{for all} \ x,y \in E.
\end{equation}
	\item [(ii)] Clearly, we also have that
\begin{equation}\label{e20}
V_p (x,x^*)= {\phi}_p (x,J^{-1}x^*)~~ \forall ~~ x \in E,\ \ x^*\in E^*.
\end{equation}
\end{itemize}
\end{remark}
\par In the sequel, we shall need the following lemmas.
\begin{lemma}\label{l20}
Aibinu and Mewomo \cite{b4}. Let $E$ be a smooth uniformly convex real Banach space with $E^*$ as its dual. Then
\begin{equation}\label{e21}
V_p (x, x^*) + p\left\langle J^{-1}x^*-x, y^* \right\rangle \leq V_p(x, x^*+y^*)
\end{equation}
for all $x\in E$ and $x^*, y^* \in E^*$.
\end{lemma}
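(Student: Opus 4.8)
The plan is to reduce the asserted inequality to an elementary scalar estimate and then recognize that estimate as a rescaling of the inequality Alber proved in the case $p=2$. Using the definition $V_p(x,u^*)=\tfrac{p}{q}\|x\|^q-p\langle x,u^*\rangle+\|u^*\|^p$, the claim $V_p(x,x^*)+p\langle J^{-1}x^*-x,y^*\rangle\le V_p(x,x^*+y^*)$ is equivalent to
$$V_p(x,x^*+y^*)-V_p(x,x^*)-p\left\langle J^{-1}x^*-x,\,y^*\right\rangle\ \ge\ 0 .$$
In the difference $V_p(x,x^*+y^*)-V_p(x,x^*)$ the terms $\tfrac{p}{q}\|x\|^q$ cancel, and the pairings against $x$ cancel as well: the term $-p\langle x,y^*\rangle$ coming from $V_p(x,x^*+y^*)$ is exactly cancelled by the $+p\langle x,y^*\rangle$ hidden inside $-p\langle J^{-1}x^*-x,y^*\rangle$. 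What remains is the equivalent inequality
$$\|x^*+y^*\|^p\ \ge\ \|x^*\|^p+p\left\langle J^{-1}x^*,\,y^*\right\rangle\qquad\text{for all }x^*,y^*\in E^* .$$

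To prove this I would argue as in the $p=2$ case. If $x^*=0$ the inequality is trivial, since then $J^{-1}0=0$ and the right-hand side vanishes, so assume $x^*\ne0$. Since $E$ is uniformly convex, $E^*$ is uniformly smooth, so the relevant generalized duality map is single-valued and $J^{-1}x^*$ satisfies $\langle J^{-1}x^*,x^*\rangle=\|J^{-1}x^*\|\,\|x^*\|=\|x^*\|^p$ and $\|J^{-1}x^*\|=\|x^*\|^{p-1}$. Writing $y^*=(x^*+y^*)-x^*$ and combining these identities with the Cauchy--Schwarz-type bound $\langle J^{-1}x^*,x^*+y^*\rangle\le\|J^{-1}x^*\|\,\|x^*+y^*\|=\|x^*\|^{p-1}\|x^*+y^*\|$, the inequality becomes
$$\|x^*+y^*\|^p+(p-1)\|x^*\|^p\ \ge\ p\,\|x^*\|^{p-1}\,\|x^*+y^*\| ,$$
that is, $a^p+(p-1)b^p\ge p\,a\,b^{p-1}$ with $a=\|x^*+y^*\|$ and $b=\|x^*\|$. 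This is the weighted arithmetic--geometric mean inequality: with weights $\tfrac1p$ and $\tfrac{p-1}{p}$ (which sum to $1$ because $p>1$) one has $\tfrac1p a^p+\tfrac{p-1}{p}b^p\ge(a^p)^{1/p}(b^p)^{(p-1)/p}=a\,b^{p-1}$, and multiplying by $p$ finishes the proof.

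One can also phrase the key inequality $\|x^*+y^*\|^p\ge\|x^*\|^p+p\langle J^{-1}x^*,y^*\rangle$ conceptually: the map $x^*\mapsto\tfrac1p\|x^*\|^p$ is convex on $E^*$ and, since $E^*$ is smooth, G$\hat{a}$teaux differentiable with derivative $J^{-1}$, so the inequality is just its (sub)gradient inequality multiplied by $p$; this route also makes transparent why uniform convexity of $E$, hence smoothness of $E^*$, is the natural hypothesis. I expect the only genuine subtlety to be bookkeeping: pinning down which normalization of $J^{-1}$ is in force so that the identity $\langle J^{-1}x^*,x^*\rangle=\|x^*\|^p$ (equivalently $\|J^{-1}x^*\|=\|x^*\|^{p-1}$) really holds, and, in the elementary route, extracting the scalar inequality with the sharp coefficient $p-1=\tfrac pq$ rather than a weaker constant. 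Once that is settled, the proof is essentially one line of Young's/AM--GM inequality.
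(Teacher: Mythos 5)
The paper offers no proof of this lemma; it is imported verbatim from Aibinu and Mewomo \cite{b4}, so there is nothing internal to compare your argument against. Your reduction is the standard one (it is exactly Alber's computation for $p=2$, rescaled): cancelling the $\frac{p}{q}\|x\|^q$ terms and the pairings against $x$ leaves precisely $\|x^*+y^*\|^p\ge\|x^*\|^p+p\langle J^{-1}x^*,y^*\rangle$, and your Cauchy--Schwarz plus weighted AM--GM derivation of that inequality is correct \emph{provided} $J^{-1}x^*$ denotes the element of $E$ with $\langle J^{-1}x^*,x^*\rangle=\|x^*\|^p$ and $\|J^{-1}x^*\|=\|x^*\|^{p-1}$, i.e.\ the gauge-$t^{p-1}$ duality map of $E^*$ (the subdifferential of $\frac1p\|\cdot\|^p$). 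The ``bookkeeping'' caveat you flag at the end is in fact the whole ballgame: with the paper's literal convention, where $J^{-1}$ is the inverse of the \emph{normalized} duality map and hence $\|J^{-1}x^*\|=\|x^*\|$, the key inequality is false for $p\neq2$ --- already in $E=\R$ with $p=3$, $x^*=2$, $y^*=-1$ one gets $|x^*+y^*|^p=1$ while $|x^*|^p+p\,x^*y^*=2$ --- so the lemma as printed only holds under the normalization you identify (equivalently, with $J^{-1}x^*$ replaced by $\|x^*\|^{p-2}J^{-1}x^*$). Your proof is therefore correct and essentially the intended one, and your parenthetical remark is not a loose end but the precise point on which the statement's validity turns.
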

\begin{lemma}\label{l21}
Aibinu and Mewomo \cite{b4}. Let $E$ be a smooth uniformly convex real Banach space. For $d > 0$, let $ B_d(0):= \left\{ x \in E: \| x \| \leq d \right\} $. Then for arbitrary $x, y \in B_d(0)$,
$$ {\|x-y \|}^p \geq{\phi}_p (x,y)- \frac{p}{q}{\| x\|}^q,~~q\geq p>1,~~ \frac{1}{p}+\frac{1}{q}=1.$$
\end{lemma}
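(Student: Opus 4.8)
The plan is to strip the statement down, via the definition of $\phi_p$, to a one–variable estimate. First I would note that Definition \ref{d1}(i) gives immediately
$$\phi_p(x,y) - \frac{p}{q}\|x\|^q = \|y\|^p - p\left\langle x, J_p y\right\rangle,$$
so that the assertion is equivalent to the inequality
$$\|x-y\|^p + p\left\langle x, J_p y\right\rangle - \|y\|^p \geq 0 \qquad (x,y \in B_d(0)).$$

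The next step is to let the duality map do its job. Writing $x = (x-y)+y$, splitting the pairing, and using that $E$ is smooth so that $J_p$ is single-valued and satisfies both $\left\langle y, J_p y\right\rangle = \|y\|\,\|J_p y\|$ and $\|J_p y\| = \|y\|^{p-1}$ (hence $p\left\langle y, J_p y\right\rangle = p\|y\|^p$), the inequality to be proved becomes
$$\|x-y\|^p + p\left\langle x-y, J_p y\right\rangle + (p-1)\|y\|^p \geq 0.$$
From here I would forget the Banach space structure entirely: the duality pairing inequality bounds the cross term below by $p\left\langle x-y, J_p y\right\rangle \geq -p\|x-y\|\,\|J_p y\| = -p\,\|x-y\|\,\|y\|^{p-1}$, so it suffices to establish the scalar inequality
$$a^p - p\,a\,b^{p-1} + (p-1)\,b^p \geq 0 \qquad (a,b \geq 0),$$
with $a=\|x-y\|$ and $b=\|y\|$. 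That is precisely Young's inequality $p\,a\,b^{p-1}\leq a^p + \tfrac{p}{q}b^p$ for the conjugate exponents $p,q$, combined with the identities $(p-1)q=p$ and $\tfrac{p}{q}=p-1$; equivalently, the map $a\mapsto a^p - p\,a\,b^{p-1}+(p-1)b^p$ is convex on $[0,\infty)$ and attains its minimum value $0$ at $a=b$. Assembling the two bounds finishes the proof.

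I do not expect a genuine obstacle here, since every step is either an algebraic rearrangement or a textbook inequality; the only points needing care are the sign in the duality estimate (we require a \emph{lower} bound on $\left\langle x-y, J_p y\right\rangle$) and the bookkeeping $(p-1)q=p$ with the conjugate exponents. Finally, I would remark that this argument uses only the smoothness of $E$ (to make $J_p$ single-valued): uniform convexity and the restriction of $x,y$ to the ball $B_d(0)$ appear not to be needed and are presumably retained only for uniformity with Lemma \ref{l20}, so that alternatively the estimate could be routed through $V_p$ and Lemma \ref{l20}.
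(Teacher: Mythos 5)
Your proof is correct. Note first that the paper itself gives no proof of this lemma: it is quoted verbatim from Aibinu and Mewomo \cite{b4}, so there is nothing internal to compare against; I can only judge your argument on its own terms, and it holds up. The reduction $\phi_p(x,y)-\tfrac{p}{q}\|x\|^q=\|y\|^p-p\langle x,J_py\rangle$ is immediate from Definition \ref{d1}(i); the split $x=(x-y)+y$ together with $\langle y,J_py\rangle=\|y\|^p$ and $\|J_py\|=\|y\|^{p-1}$ (both follow from $J_p(y)=\|y\|^{p-2}J(y)$ as defined in Section 2) correctly turns the claim into $a^p-pab^{p-1}+(p-1)b^p\geq 0$ with $a=\|x-y\|$, $b=\|y\|$; and that scalar inequality is exactly Young's inequality for the conjugate pair, using $(p-1)q=p$ and $\tfrac{p}{q}=p-1$. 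The sign of the duality estimate is handled correctly (you need the lower bound $\langle x-y,J_py\rangle\geq-\|x-y\|\,\|y\|^{p-1}$, which is just the defining pairing bound). Your closing observation is also accurate: the argument uses only single-valuedness of $J_p$ (smoothness), and even that is inessential since the two identities hold for every selection from $J_p(y)$; neither uniform convexity, nor the restriction to $B_d(0)$, nor the condition $q\geq p$ plays any role, so the hypotheses in the statement are stronger than what the inequality requires.
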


\begin{lemma}\label{l31}
Aibinu and Mewomo \cite{b4}. Let $E$ be a reflexive strictly convex and smooth real Banach space with the dual $E^*$. Then
\begin{equation}\label{e31}
{\phi}_p(y,x)-{\phi}_p(y,z)\geq p\left\langle z-y, Jx-Jz\right\rangle  \ \mbox{ for all} \ x, y, z\in E.
\end{equation}
\end{lemma}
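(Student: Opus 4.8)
The inequality (\ref{e31}) is the $p$-version of Alber's three-point estimate, and the quickest route is to read it off Lemma \ref{l20} --- the ``convexity'' inequality for $V_p$ --- together with the identity (\ref{e20}). The plan: fix $x,y,z\in E$ and apply Lemma \ref{l20} with its first slot equal to $y$, with $x^*:=Jz$, and with $y^*:=Jx-Jz$, so that $x^*+y^*=Jx$. Lemma \ref{l20} then reads
$$V_p(y,Jz)+p\left\langle J^{-1}(Jz)-y,\,Jx-Jz\right\rangle\ \le\ V_p(y,Jx).$$
Since $J^{-1}(Jz)=z$ and, by (\ref{e20}), $V_p(y,Jz)=\phi_p\!\left(y,J^{-1}Jz\right)=\phi_p(y,z)$ and likewise $V_p(y,Jx)=\phi_p(y,x)$, rearranging the last display gives exactly $\phi_p(y,x)-\phi_p(y,z)\ge p\left\langle z-y,\,Jx-Jz\right\rangle$, which is (\ref{e31}). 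Thus, once Lemma \ref{l20} and (\ref{e20}) are granted, the lemma is a single change of variables.

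The step that needs justification is the legitimacy of that substitution, i.e.\ that $J^{-1}$ makes sense, so that $J^{-1}(Jz)=z$. This is where the hypotheses are used: $E$ smooth makes the normalized duality map $J:E\to E^*$ single-valued, $E$ strictly convex makes $J$ injective, and $E$ reflexive makes $J$ onto; hence $J$ is a bijection of $E$ onto $E^*$ and $J^{-1}$ is well defined. Alternatively, one can proceed directly: expand both sides of (\ref{e31}) via Definition \ref{d1} and reduce, by the Fenchel--Young inequality $ab\le\tfrac{a^p}{p}+\tfrac{b^q}{q}$ with $\tfrac1p+\tfrac1q=1$ (equivalently the convexity of $t\mapsto t^p$), to an elementary estimate --- the same pattern as in Alber's proof of the case $p=2$. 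This second route needs only reflexivity, strict convexity and smoothness.

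I do not anticipate a real analytic obstacle, since the content already sits in Lemma \ref{l20}; the work is clerical --- aligning, with the correct signs and a consistent factor $p$, the four terms that Lemma \ref{l20} (or the expansion) produces with those of (\ref{e31}). So in writing it up I would spend the care on fixing the substitution $x^*=Jz,\ y^*=Jx-Jz$ and on the two uses of (\ref{e20}); after those, the claimed inequality is forced.
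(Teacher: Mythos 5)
The paper does not actually prove Lemma \ref{l31}: it is imported verbatim from Aibinu and Mewomo \cite{b4} and listed among the preliminaries without argument, so there is no in-paper proof to compare against. Judged on its own terms, your derivation is the natural one and is formally correct: substituting $x^{*}=Jz$ and $y^{*}=Jx-Jz$ into Lemma \ref{l20}, using $J^{-1}(Jz)=z$ and the identity (\ref{e20}) twice, and rearranging does yield (\ref{e31}); this is exactly the Alber-type three-point argument, and your justification of why $J^{-1}$ is a genuine bijection (smoothness for single-valuedness, strict convexity for injectivity, reflexivity for surjectivity) is the right place to spend the hypotheses.

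Two caveats are worth recording. First, there is a hypothesis mismatch: Lemma \ref{l20} is stated for a \emph{smooth uniformly convex} space, whereas Lemma \ref{l31} claims the inequality for a \emph{reflexive, strictly convex and smooth} space, which is strictly weaker (uniform convexity implies reflexivity and strict convexity, not conversely). So the route through Lemma \ref{l20} only establishes the lemma under the stronger hypothesis; to get the statement as written you must fall back on your second, direct route (expanding ${\phi}_p$ via Definition \ref{d1} and using Young's inequality), which you correctly note needs only reflexivity, strict convexity and smoothness. Second, if you do carry out that direct expansion, be careful with the paper's notation: ${\phi}_p$ is defined through the generalized duality map $J_p$ while the right-hand side of (\ref{e31}) is written with the normalized map $J$, and the reduction to a Young-type scalar inequality comes out differently depending on which map is meant. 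This ambiguity is inherited from the paper rather than introduced by you, but it is precisely the ``clerical'' step you defer, and it is where the work actually lies if one wants a self-contained proof at the stated level of generality.
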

\begin{lemma}\label{l11}
 Xu \cite{bh1}. Let $\left\{a_n\right\}$ be a sequence of nonnegative real numbers satisfying the following
relations:
$$a_{n+1}\leq(1-{\alpha}_n)a_n+{\alpha}_n{\sigma}_n+{\gamma}_n, ~~n\in \N,$$
where
\begin{itemize}
\item[(i)]${\left\{\alpha\right\}}_n\subset (0, 1)$, $\displaystyle\sum_{n=1}^{\infty} {\alpha}_n = \infty$;
\item[(ii)]$\limsup {\left\{\sigma\right\}}_n\leq 0$;
\item[(iii)] ${\gamma}_n\geq 0$, $\displaystyle\sum_{n=1}^{\infty} {\gamma}_n < \infty$.
\end{itemize}
 Then, $a_n\rightarrow 0$ as $n\rightarrow \infty$.
\end{lemma}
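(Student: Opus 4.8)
The plan is to fix an arbitrary $\epsilon>0$ and show that $\limsup_{n\to\infty}a_n\le 2\epsilon$; since $a_n\ge 0$ and $\epsilon$ is arbitrary, this forces $a_n\to 0$. Using (ii) and (iii), I would first choose $N\in\N$ large enough that $\sigma_n\le\epsilon$ for all $n\ge N$ and that the tail $r_N:=\sum_{k=N}^{\infty}\gamma_k\le\epsilon$ (the latter is possible because $\sum\gamma_n<\infty$). For $n\ge N$ the hypothesis then gives
$$a_{n+1}\le(1-\alpha_n)a_n+\alpha_n\epsilon+\gamma_n,$$
which, after subtracting $\epsilon$ from both sides, rewrites as $a_{n+1}-\epsilon\le(1-\alpha_n)(a_n-\epsilon)+\gamma_n$.

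The next step is to pass to positive parts. Set $c_n:=(a_n-\epsilon)^{+}=\max\{a_n-\epsilon,0\}$. A short case check --- separating the case $a_n\ge\epsilon$ from $a_n<\epsilon$, and using $0<1-\alpha_n<1$ together with $\gamma_n\ge 0$ --- shows that
$$c_{n+1}\le(1-\alpha_n)c_n+\gamma_n\qquad\text{for all }n\ge N.$$
Iterating this inequality from $N$ up to $n$ and discarding the factors $\prod_{j=k+1}^{n}(1-\alpha_j)\le 1$ appearing in front of the $\gamma_k$'s yields
$$c_{n+1}\le\Bigl(\prod_{k=N}^{n}(1-\alpha_k)\Bigr)c_N+\sum_{k=N}^{n}\gamma_k\le\Bigl(\prod_{k=N}^{n}(1-\alpha_k)\Bigr)c_N+r_N.$$

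Finally I would let $n\to\infty$. From the elementary inequality $1-t\le e^{-t}$ one gets $\prod_{k=N}^{n}(1-\alpha_k)\le\exp\bigl(-\sum_{k=N}^{n}\alpha_k\bigr)$, and by (i) the exponent diverges to $-\infty$, so this product tends to $0$. Hence $\limsup_{n\to\infty}c_n\le r_N\le\epsilon$, and since $a_n\le c_n+\epsilon$ this gives $\limsup_{n\to\infty}a_n\le 2\epsilon$. Letting $\epsilon\downarrow 0$ gives $\limsup_{n\to\infty}a_n\le 0$, whence $a_n\to 0$ because $a_n\ge 0$.

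The main (and fairly minor) obstacle is the reduction to the auxiliary sequence $c_n$: one must verify that the recursion survives the step where $a_n<\epsilon$, so that $a_n-\epsilon<0$ and naive subtraction could be lossy --- this is precisely where the sign conditions $1-\alpha_n>0$ and $\gamma_n\ge 0$ are used. Everything else is the standard ``vanishing infinite product against a summable tail'' estimate. An alternative to the positive-part device is to iterate the original inequality directly and control the contribution of the $\alpha_n\epsilon$ terms via the telescoping bound $\sum_{k}\alpha_k\prod_{j>k}(1-\alpha_j)\le 1$; the bookkeeping is essentially the same.
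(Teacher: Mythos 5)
Your proof is correct and complete: the reduction $a_{n+1}-\epsilon\le(1-\alpha_n)(a_n-\epsilon)+\gamma_n$, the passage to positive parts, and the vanishing of $\prod(1-\alpha_k)$ via $1-t\le e^{-t}$ all check out. The paper itself states this lemma without proof, citing Xu, and your argument is essentially the standard proof of Xu's lemma, so there is nothing to compare beyond noting that you have supplied what the paper omits.
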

\begin{lemma}\label{e18}
Chidume and Idu \cite{b28}. For a real number $p>1$, let $X, Y$ be real uniformly convex and uniformly smooth Banach spaces. Let $W:=X\times Y$ with the norm ${\|w\|}_W=\left({\|u\|}^p_X+{\|v\|}^p_Y \right)^{\frac{1}{p}}$ for arbitrary $w:=(u,v)\in W$. Let $W^*:=X^*\times Y^*$ denotes the dual space of $Z$. For arbitrary $z=(u,v)\in Z$, define the map $j_p^Z:Z\rightarrow Z^*$ by
$$j_p^W(z)=j_p^W(u,v)=\left(j_p^X(u),j_p^Y(v)\right),$$
such that for arbitrary $w_1=(u_1,v_1)$, $w_2=(u_2,v_2)$ in $Z$, the duality pairing $\left\langle.,.\right\rangle$ is given by
$$\left\langle w_1,j_p^W(w_2)\right\rangle=\left\langle u_1,j_p^X(u_2)\right\rangle+\left\langle v_1,j_p^Y(v_2)\right\rangle.$$
Then,
\begin{itemize}
	\item[(i)]$W$ is uniformly smooth and uniformly convex,
	\item[(ii)]	$j_p^W$ is single-valued duality mapping on $W$.
\end{itemize}
\end{lemma}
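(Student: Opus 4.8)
\emph{Proof plan.} The only substantive point is part~(i); once $W$ is known to be (uniformly) smooth, part~(ii) is a short computation. For (i) the plan is to isolate one lemma and use it twice: \emph{for every real number $r>1$, the $\ell^{r}$-direct sum of two uniformly convex Banach spaces is uniformly convex.} Granting this, I would apply it with $r=p$ to $X,Y$ to obtain that $W=X\times Y$ is uniformly convex. For uniform smoothness of $W$ I would recall that a Banach space is uniformly smooth if and only if its dual is uniformly convex; since the dual of the $\ell^{p}$-sum is the $\ell^{q}$-sum ($\tfrac1p+\tfrac1q=1$), one has $W^{*}=X^{*}\times Y^{*}$ with $\|(f,g)\|_{W^{*}}=(\|f\|_{X^{*}}^{q}+\|g\|_{Y^{*}}^{q})^{1/q}$, and $X^{*},Y^{*}$ are uniformly convex because $X,Y$ are uniformly smooth; applying the lemma with $r=q$ gives $W^{*}$ uniformly convex, i.e. $W$ uniformly smooth.

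To prove the lemma I would use the sequential form of uniform convexity: $E$ is uniformly convex iff $\|x_{n}+y_{n}\|\to 2$ with $x_{n},y_{n}\in B_{E}$ forces $\|x_{n}-y_{n}\|\to0$. So let $w_{n}=(u_{n},v_{n})$, $w_{n}'=(u_{n}',v_{n}')$ lie in $B_{W}$ with $\|w_{n}+w_{n}'\|_{W}\to2$, and write $a_{n}=\|u_{n}\|_{X}$, $b_{n}=\|v_{n}\|_{Y}$, $a_{n}'=\|u_{n}'\|_{X}$, $b_{n}'=\|v_{n}'\|_{Y}$, so $a_{n}^{r}+b_{n}^{r}\le1$ and $(a_{n}')^{r}+(b_{n}')^{r}\le1$. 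From $\|u_{n}+u_{n}'\|_{X}\le a_{n}+a_{n}'$, $\|v_{n}+v_{n}'\|_{Y}\le b_{n}+b_{n}'$ and the triangle inequality in $(\R^{2},\|\cdot\|_{r})$ one gets $(a_{n}+a_{n}')^{r}+(b_{n}+b_{n}')^{r}\to2^{r}$; by uniform convexity of the finite-dimensional space $(\R^{2},\|\cdot\|_{r})$ this yields $a_{n}-a_{n}'\to0$, $b_{n}-b_{n}'\to0$, and it also forces $\|u_{n}+u_{n}'\|_{X}-(a_{n}+a_{n}')\to0$ and $\|v_{n}+v_{n}'\|_{Y}-(b_{n}+b_{n}')\to0$, since the corresponding $r$-th powers are squeezed between $0$ and a null sequence. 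Passing to a subsequence along which $a_{n},a_{n}'\to\alpha$: if $\alpha=0$ then $\|u_{n}-u_{n}'\|_{X}\le a_{n}+a_{n}'\to0$; if $\alpha>0$ then $u_{n}/a_{n}$ and $u_{n}'/a_{n}'$ are unit vectors of $X$ with $\|u_{n}/a_{n}+u_{n}'/a_{n}'\|_{X}\to2$, so uniform convexity of $X$ gives $\|u_{n}/a_{n}-u_{n}'/a_{n}'\|_{X}\to0$, hence $\|u_{n}-u_{n}'\|_{X}\to0$. The same argument in the second coordinate gives $\|v_{n}-v_{n}'\|_{Y}\to0$; since this holds along every subsequence, $\|w_{n}-w_{n}'\|_{W}\to0$, which is the lemma.

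For part~(ii): by (i) the space $W$ is smooth, so its generalized duality map $J_{p}$ is single-valued (the property of $J_{p}$ recalled in Section~2), and it remains to check that the map $j_{p}^{W}$ given by the stated formula coincides with it, i.e. that $(j_{p}^{X}u,j_{p}^{Y}v)$ satisfies the two defining requirements. Using $\|j_{p}^{X}u\|_{X^{*}}=\|u\|_{X}^{p-1}$, $\|j_{p}^{Y}v\|_{Y^{*}}=\|v\|_{Y}^{p-1}$ and $(p-1)q=p$, one computes $\|(j_{p}^{X}u,j_{p}^{Y}v)\|_{W^{*}}^{q}=\|u\|_{X}^{p}+\|v\|_{Y}^{p}=\|(u,v)\|_{W}^{p}$, hence $\|(j_{p}^{X}u,j_{p}^{Y}v)\|_{W^{*}}=\|(u,v)\|_{W}^{p-1}$; and using $\langle u,j_{p}^{X}u\rangle=\|u\|_{X}^{p}$, $\langle v,j_{p}^{Y}v\rangle=\|v\|_{Y}^{p}$ (item~(iv) of that list) one gets $\langle (u,v),(j_{p}^{X}u,j_{p}^{Y}v)\rangle=\|u\|_{X}^{p}+\|v\|_{Y}^{p}=\|(u,v)\|_{W}\,\|(j_{p}^{X}u,j_{p}^{Y}v)\|_{W^{*}}$. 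Thus $(j_{p}^{X}u,j_{p}^{Y}v)\in J_{p}(u,v)$, and single-valuedness forces $j_{p}^{W}=J_{p}$, which is the assertion.

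I expect the case analysis inside the lemma to be the main obstacle — in particular the degenerate situation in which a coordinate norm $a_{n}$ tends to $0$, which requires rescaling by $a_{n},a_{n}'$ before uniform convexity of $X$ can be invoked. An alternative that sidesteps this entirely is to quote the classical theorem (Day) that $\ell^{r}$-direct sums preserve both uniform convexity and uniform smoothness, at the cost of relying on an external reference.
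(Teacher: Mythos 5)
Your argument is essentially correct, but note that the paper itself offers no proof of this statement: Lemma~\ref{e18} is quoted verbatim as a preliminary result from Chidume and Idu \cite{b28}, so there is no internal proof to compare against. What you have written is the standard self-contained argument (in effect Day's theorem on $\ell^{r}$-direct sums, plus a direct verification of the duality-map identities), and the two delicate points are exactly the ones you flag and handle correctly: (a) reducing uniform convexity of $W$ to uniform convexity of the scalar space $(\R^{2},\|\cdot\|_{r})$ via the squeeze $\|u_{n}+u_{n}'\|_{X}^{r}+\|v_{n}+v_{n}'\|_{Y}^{r}\le (a_{n}+a_{n}')^{r}+(b_{n}+b_{n}')^{r}\le 2^{r}$, and then rescaling by $a_{n},a_{n}'$ before invoking uniform convexity of $X$, with the degenerate case $a_{n}\to 0$ treated separately; and (b) the identification $W^{*}=X^{*}\oplus_{q}Y^{*}$ (via H\"older) so that uniform smoothness of $W$ follows by duality. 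For part (ii) your computation $\|(j_{p}^{X}u,j_{p}^{Y}v)\|_{W^{*}}^{q}=\|u\|_{X}^{p}+\|v\|_{Y}^{p}$ together with $\langle (u,v),(j_{p}^{X}u,j_{p}^{Y}v)\rangle=\|(u,v)\|_{W}^{p}$ shows membership in $J_{p}^{W}(u,v)$, and single-valuedness on the smooth space $W$ (property (i) of the list in Section~2) closes the argument. The only cosmetic caveat is that the lemma as printed conflates $W$ and $Z$; your proof correctly reads both as $X\times Y$.
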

\begin{lemma}\label{l17}
Chidume and Idu \cite{b28}. Let $E$ be a uniformly convex and uniformly smooth real Banach space. Let $F : E \rightarrow E^*$ and $K : E^* \rightarrow E$ be monotone mappings with $D(F) = R(K) = E$. Let $T : E \times E^* \rightarrow E^* \times E$ be defined by $T(u, v) = (Ju-Fu+v, J^{-1}v-Kv-u)$ for all $(u, v) \in E \times E^*$, then $T$ is $J$-pseudocontractive. Moreover, if the Hammerstein equation $u + KFu = 0$ has a solution in $E$, then $u^*$ is a solution of $u + KFu = 0$ if and only if $(u^*, v^*)\in  F_E^ J (T)$, where $v^* = Fu^*$.
\end{lemma}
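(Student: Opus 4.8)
The plan is to establish the two assertions separately; both reduce to direct computations once the product structure is made explicit. First I would fix notation: set $W := E \times E^*$ with norm $\|(u,v)\|_W = \left(\|u\|^2 + \|v\|^2\right)^{1/2}$. Since $E$ is uniformly convex and uniformly smooth, so is $E^*$, and Lemma \ref{e18} applied with $X = E$, $Y = E^*$, $p = 2$ shows that $W$ is uniformly convex and uniformly smooth and that its normalized duality map $J^W : W \to W^*$ is single-valued and given by $J^W(u,v) = (Ju,\, J^{-1}v)$, where $W^* = E^* \times E$ (by reflexivity) and $J^{-1} = J_{E^*}$ is the normalized duality map of $E^*$ (property (v) of Section 2 with $p = q = 2$). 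I take the definition that $T : W \to W^*$ is $J$-pseudocontractive to mean $\langle Tw_1 - Tw_2,\, w_1 - w_2 \rangle \le \langle J^W w_1 - J^W w_2,\, w_1 - w_2 \rangle$ for all $w_1, w_2 \in W$, where the duality pairing on $W^* \times W$ is the one induced by those of $E$ and $E^*$, and that $F_E^J(T) = \{ w \in W : Tw = J^W w \}$.

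For the $J$-pseudocontractivity I would take $w_1 = (u_1, v_1)$, $w_2 = (u_2, v_2)$, write $Tw_1 - Tw_2$ from the definition of $T$, and pair it with $w_1 - w_2 = (u_1 - u_2,\, v_1 - v_2)$ coordinate by coordinate. The first coordinate contributes $\langle Ju_1 - Ju_2,\, u_1 - u_2 \rangle - \langle Fu_1 - Fu_2,\, u_1 - u_2 \rangle + \langle v_1 - v_2,\, u_1 - u_2 \rangle$ and the second contributes $\langle v_1 - v_2,\, J^{-1}v_1 - J^{-1}v_2 \rangle - \langle v_1 - v_2,\, Kv_1 - Kv_2 \rangle - \langle v_1 - v_2,\, u_1 - u_2 \rangle$; the two cross terms $\pm\langle v_1 - v_2,\, u_1 - u_2 \rangle$ cancel, leaving $\langle Ju_1 - Ju_2, u_1 - u_2 \rangle + \langle v_1 - v_2, J^{-1}v_1 - J^{-1}v_2 \rangle$ minus $\langle Fu_1 - Fu_2, u_1 - u_2 \rangle + \langle v_1 - v_2, Kv_1 - Kv_2 \rangle$. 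The subtracted quantity is nonnegative by the monotonicity of $F$ and of $K$, and the remaining quantity is exactly $\langle J^W w_1 - J^W w_2,\, w_1 - w_2 \rangle$ by the product formula for $J^W$ and the product pairing. Hence $\langle Tw_1 - Tw_2,\, w_1 - w_2 \rangle \le \langle J^W w_1 - J^W w_2,\, w_1 - w_2 \rangle$, which is the claim.

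For the ``moreover'' part I would fix $u^* \in E$, put $v^* := Fu^*$, and simply unwind the condition $(u^*, v^*) \in F_E^J(T)$, i.e.\ $T(u^*,v^*) = J^W(u^*,v^*) = (Ju^*,\, J^{-1}v^*)$. Equating first coordinates gives $Ju^* - Fu^* + v^* = Ju^*$, which is automatic since $v^* = Fu^*$; equating second coordinates gives $J^{-1}v^* - Kv^* - u^* = J^{-1}v^*$, i.e.\ $u^* + Kv^* = 0$, i.e.\ $u^* + KFu^* = 0$. So $(u^*, v^*) \in F_E^J(T)$ holds if and only if $u^*$ solves $u + KFu = 0$; in particular, whenever the Hammerstein equation is solvable its solutions correspond precisely to the $J$-fixed points of $T$.

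I do not anticipate a real obstacle, since the whole argument is algebraic. The only delicate point is the bookkeeping of the product duality pairing on $W^* \times W$ — tracking which coordinate of a pair lives in $E$ and which in $E^*$ — so that the cross terms genuinely cancel and the monotonicity inequalities for $F : E \to E^*$ and $K : E^* \to E$ are inserted in the right slots; and one must appeal to Lemma \ref{e18} at the start to know $J^W$ is single-valued with the product form $J^W(u,v) = (Ju, J^{-1}v)$, so that the $J$-fixed point equation $T(u^*,v^*) = J^W(u^*,v^*)$ is unambiguous.
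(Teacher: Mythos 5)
Your proof is correct. The paper states this lemma without proof, importing it verbatim from Chidume and Idu \cite{b28}; your argument --- identifying $J^W(u,v)=(Ju,\,J^{-1}v)$ via Lemma \ref{e18} with $p=2$, noting that the cross terms $\pm\langle v_1-v_2,\,u_1-u_2\rangle$ cancel so that $\langle (J^W-T)w_1-(J^W-T)w_2,\,w_1-w_2\rangle$ reduces to the sum of the monotonicity pairings of $F$ and $K$, and then unwinding $T(u^*,v^*)=J^W(u^*,v^*)$ coordinatewise --- is the standard computation and is essentially the proof given in the cited source.
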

\begin{lemma}\label{l12}
Z$\check{a}$linescu  \cite{b37}. Let $\psi : {\R}^+\rightarrow {\R}^+$ be increasing with $\displaystyle \lim_{t\rightarrow \infty} \psi(t)=\infty.$ Then $J^{-1}_{\psi}$ is single-valued and uniformly continuous on bounded sets of $E^*$ if and only if $E$ is a uniformly convex Banach space.
\end{lemma}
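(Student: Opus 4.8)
The plan is to reduce this equivalence to two classical facts from Banach space geometry: first, that a space $X$ is uniformly smooth precisely when its normalized duality map $J^{X}$ is single-valued and norm-to-norm uniformly continuous on bounded subsets of $X$; and second, that $X$ is uniformly convex precisely when $X^{*}$ is uniformly smooth. The bridge between $J_{\psi}^{-1}$ acting on $E^{*}$ and a duality map on $E^{*}$ itself is the identity $J_{\psi}^{-1}=J_{\psi^{-1}}^{E^{*}}$, valid once $E$ is reflexive: $x\in J_{\psi}^{-1}(x^{*})$ means $\|x^{*}\|=\psi(\|x\|)$ and $\langle x^{*},x\rangle=\|x^{*}\|\,\|x\|$, which read the other way is exactly $x^{*}\in J_{\psi}(x)$, and comparing norms gives $\|x\|=\psi^{-1}(\|x^{*}\|)$. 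A preliminary observation is that single-valuedness of $J_{\psi}^{-1}$ on bounded sets already forces $E$ to be reflexive and strictly convex (hence $E^{*}$ smooth) and forces $\psi$ to be a genuine gauge, i.e.\ a continuous strictly increasing bijection of $[0,\infty)$; otherwise $\psi$ would fail to be onto $[0,\infty)$ or would be flat on some interval, and then $J_{\psi}^{-1}$ would be undefined on, or multivalued on, a sphere of $E^{*}$.

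Granting that reduction, the next step is to strip off the gauge. Using the radial decomposition $J_{\eta}^{E^{*}}(x^{*})=\tfrac{\eta(\|x^{*}\|)}{\|x^{*}\|}\,J^{E^{*}}(x^{*})$ for $x^{*}\neq 0$, one shows that for a continuous strictly increasing gauge $\eta$ (here $\eta=\psi^{-1}$) the map $J_{\eta}^{E^{*}}$ is single-valued and uniformly continuous on bounded sets if and only if the normalized map $J^{E^{*}}$ is: the forward implication is a routine estimate, splitting $J_{\eta}^{E^{*}}(x^{*})-J_{\eta}^{E^{*}}(y^{*})$ into a term controlled by the uniform continuity of $\eta$ on a compact interval of scalars and a term controlled by the uniform continuity of $J^{E^{*}}$, while the converse is obtained by testing uniform continuity of $J_{\eta}^{E^{*}}$ along rays and along spheres. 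Chaining everything together, $J_{\psi}^{-1}$ is single-valued and uniformly continuous on bounded sets if and only if $E^{*}$ is uniformly smooth, which by the classical duality between uniform convexity and uniform smoothness holds if and only if $E$ is uniformly convex; for these ingredients I would cite Cioranescu \cite{b3}, Xu--Roach \cite{b7}, and Z$\check{a}$linescu \cite{b37}.

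The step I expect to be the main obstacle is the careful handling of $\psi$ under the weak hypothesis that it is merely increasing with $\psi(t)\to\infty$ and no a priori continuity: one must show both that the assumed properties of $J_{\psi}^{-1}$ upgrade $\psi$ to a continuous strictly increasing bijection (legitimising the reduction in the ``only if'' direction) and that, conversely, when $E$ is uniformly convex the rescaling by $\psi^{-1}$ preserves uniform continuity on bounded sets even though $\psi^{-1}$ is only known to be increasing --- for which one uses that a monotone function is continuous off a countable set and may be squeezed between continuous gauges on each ball. A cleaner route that bypasses most of this bookkeeping is to write $J_{\psi}=\partial f_{\psi}$ for the convex function $f_{\psi}(x)=\int_{0}^{\|x\|}\psi(t)\,dt$, so that $J_{\psi}^{-1}=\partial f_{\psi}^{*}$; then single-valuedness of $J_{\psi}^{-1}$ is G\^ateaux differentiability of $f_{\psi}^{*}$, uniform continuity of $J_{\psi}^{-1}$ on bounded sets is uniform Fr\'echet differentiability of $f_{\psi}^{*}$ on bounded sets, and by the conjugacy theory of uniformly convex and uniformly smooth functions this is equivalent to uniform convexity of $f_{\psi}$, hence to uniform convexity of the norm of $E$. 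Either way, once the gauge bookkeeping is settled the statement follows from the standard geometry of uniformly convex spaces.
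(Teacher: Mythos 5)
This lemma is quoted in the paper as a known result of Z\u{a}linescu \cite{b37}; the paper supplies no proof of its own, so there is no internal argument to compare yours against, and your sketch has to be judged on its own terms. Judged that way it is sound, and your two routes in fact bracket the literature: the convex-analytic route --- writing $J_{\psi}=\partial f_{\psi}$ with $f_{\psi}(x)=\int_{0}^{\|x\|}\psi(t)\,dt$, identifying $J_{\psi}^{-1}=\partial f_{\psi}^{*}$, and invoking the conjugate duality between uniform convexity and uniform Fr\'echet smoothness of convex functions on bounded sets --- is essentially Z\u{a}linescu's own method in \cite{b37}, while your first route amounts to dualizing Lemma \ref{l13} (Kien \cite{b34}): applying that lemma to $E^{*}$ in place of $E$ (once reflexivity is secured) gives that $E$ is uniformly convex if and only if $J_{q}=J_{p}^{-1}$ is single-valued and uniformly continuous on bounded subsets of $E^{*}$, and what then remains is exactly the gauge-change step you describe. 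Two points deserve the care you already flag. First, the identification $J_{\psi}^{-1}=J_{\psi^{-1}}^{E^{*}}$ needs $E$ reflexive; you correctly propose to extract this from the requirement that $J_{\psi}^{-1}$ be everywhere defined (every functional must attain its norm, so James's theorem applies) rather than assume it, and likewise single-valuedness does force $\psi$ to be a strictly increasing continuous bijection, since a flat or jumping $\psi$ makes $J_{\psi}^{-1}$ multivalued or empty on a whole sphere of $E^{*}$. Second, in the gauge-change estimate the scalar factor $\eta(\|x^{*}\|)/\|x^{*}\|$ can blow up near the origin (e.g. $\eta(t)=\sqrt{t}$), so the ``routine estimate'' must treat a neighbourhood of $0$ separately, using that both images are then small in norm; this is resolvable but should not be waved through. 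Neither point is a genuine gap, so the proposal is acceptable as a proof plan, with the second route being the one that most faithfully reproduces the cited source.
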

\begin{theorem}\label{t3}
Xu \cite{d26}. Let $E$ be a real uniformly convex Banach space. For arbitrary $r>0$, let $B_r(0):=\left\{x \in E: \|x\| \leq r\right\}$. Then, there exists a continuous strictly increasing convex function
$$g:[0,\infty)\rightarrow [0,\infty),~~ g(0)=0,$$
such that for every $x, y \in B_r(0), j_p(x)\in J_p(x), j_p(y)\in J_p(y)$, the following inequalities hold:
 \begin{itemize}
	\item[(i)]${\|x + y \|}^p \geq {\|x \|}^p + p\left\langle y, j_p(x) \right\rangle + g(\|y \|)$;
	\item[(ii)]$\left\langle x-y, j_p(x)-j_p(y) \right\rangle \geq g(\|x-y \|)$.
 \end{itemize}	
\end{theorem}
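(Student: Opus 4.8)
\emph{Proof proposal.} The plan is to derive both inequalities from a single structural fact about the power function $f:=\|\cdot\|^p$: that $f$ is \emph{uniformly convex on every bounded ball of $E$}, with a modulus of convexity that may be chosen continuous, strictly increasing, convex and vanishing at $0$. Fix $R>0$ and set
$$\rho_R(\epsilon)\ :=\ \inf\Big\{\tfrac12\|u\|^p+\tfrac12\|v\|^p-\big\|\tfrac{u+v}{2}\big\|^p\ :\ u,v\in B_R(0),\ \|u-v\|\ge\epsilon\Big\},\qquad \epsilon\in[0,2R].$$
Then $\rho_R$ is nondecreasing with $\rho_R(0)=0$, and the choice $u=\tfrac{\epsilon}{2}e$, $v=-\tfrac{\epsilon}{2}e$ for a unit vector $e$ gives $\rho_R(\epsilon)\le 2^{-p}\epsilon^p$. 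The crux of the whole argument — and the step I expect to be the real obstacle — is to show $\rho_R(\epsilon)>0$ for every $\epsilon>0$, since this is exactly where the purely qualitative uniform convexity of $E$ must be converted into a quantitative statement about $f$ on a \emph{ball}, not just on the unit sphere. I would argue by contradiction: if $u_n,v_n\in B_R(0)$ satisfy $\|u_n-v_n\|\ge\epsilon$ while the bracketed quantity tends to $0$, pass to a subsequence along which $\|u_n\|\to a$, $\|v_n\|\to b$, $\|\tfrac{u_n+v_n}{2}\|\to c$ in $[0,R]$; then $\tfrac12 a^p+\tfrac12 b^p=c^p$ and $c\le\tfrac{a+b}{2}$, so strict convexity of $t\mapsto t^p$ (using $p>1$) forces $a=b=c$. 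If $a=0$ this already contradicts $\|u_n-v_n\|\ge\epsilon$; if $a>0$, the normalised vectors $\hat u_n=u_n/\|u_n\|$, $\hat v_n=v_n/\|v_n\|$ are unit vectors with $\|\tfrac{\hat u_n+\hat v_n}{2}\|\to1$, so uniform convexity of $E$ gives $\|\hat u_n-\hat v_n\|\to0$, whence $\|u_n-v_n\|\le\|u_n\|\,\|\hat u_n-\hat v_n\|+\big|\|u_n\|-\|v_n\|\big|\to0$, a contradiction.

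Next I would manufacture a well-behaved modulus: let $g_R$ be the convex envelope of $\rho_R$ on $[0,2R]$, extended affinely to $[0,\infty)$. As a convex minorant of a nonnegative function, $g_R\ge0$ and $g_R(0)=0$; a routine estimate (any convex representation $\sum_j\lambda_jt_j=t$ must place mass $\ge t/(4R)$ on indices $j$ with $t_j\ge t/2$) gives $g_R(t)\ge \tfrac{t}{4R}\rho_R(t/2)>0$ for $t>0$, and a convex function that vanishes at $0$ and is positive afterwards is automatically continuous and strictly increasing. Moreover $g_R\le\rho_R\le 2^{-p}(\cdot)^p$, and by construction, for all $u,v\in B_R(0)$,
$$\big\|\tfrac{u+v}{2}\big\|^p\ \le\ \tfrac12\|u\|^p+\tfrac12\|v\|^p-g_R(\|u-v\|).$$
Now I would iterate this midpoint inequality along the dyadic points $a_k:=(1-2^{-k})a+2^{-k}b$ for fixed $a,b\in B_R(0)$, using $a_k=\tfrac12(a+a_{k-1})$ and $\|a-a_{k-1}\|=2^{-(k-1)}\|a-b\|$; telescoping gives
$$\frac{f(a_k)-f(a)}{2^{-k}}\ \le\ f(b)-f(a)-\sum_{i=0}^{k-1}2^{i+1}g_R\!\big(2^{-i}\|a-b\|\big).$$
The series converges since $2^{i+1}g_R(2^{-i}s)\le 2^{1-p}s^p\,2^{i(1-p)}$ with $1-p<0$, and letting $k\to\infty$ — noting that the left-hand side decreases to the one-sided directional derivative $f'(a;b-a)\ge p\langle j_p(a),b-a\rangle$, valid for any selection $j_p(a)\in J_p(a)=\partial\big(\tfrac1p\|\cdot\|^p\big)(a)$ and requiring no smoothness of $E$ — I obtain, with $G_R(s):=\sum_{i\ge0}2^{i+1}g_R(2^{-i}s)$,
$$\|b\|^p\ \ge\ \|a\|^p+p\langle j_p(a),\,b-a\rangle+G_R(\|a-b\|),\qquad a,b\in B_R(0),$$
where $G_R$ inherits from $g_R$ the properties of being continuous, strictly increasing, convex, and zero at $0$ (the convergence is uniform on compacta).

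Finally I would specialise with $R=2r$ and set $g:=\min\{1,\tfrac{2}{p}\}\,G_{2r}$, a positive multiple of $G_{2r}$ and hence again continuous, strictly increasing, convex and vanishing at $0$. Taking $a=x$ and $b=x+y$ (both lie in $B_{2r}(0)$) yields $\|x+y\|^p\ge\|x\|^p+p\langle y,j_p(x)\rangle+G_{2r}(\|y\|)\ge\|x\|^p+p\langle y,j_p(x)\rangle+g(\|y\|)$, which is (i). For (ii), apply the displayed inequality with $(a,b)=(x,y)$ and with $(a,b)=(y,x)$ — both pairs lie in $B_r(0)\subset B_{2r}(0)$ — and add them: the norm terms cancel, leaving $\langle x-y,\,j_p(x)-j_p(y)\rangle\ge\tfrac{2}{p}G_{2r}(\|x-y\|)\ge g(\|x-y\|)$. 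Thus the one function $g$ serves for both (i) and (ii). Everything past the positivity of $\rho_R$ is essentially bookkeeping; the only other point needing a little care is checking that the convex envelope $g_R$ retains strict monotonicity, which the estimate $g_R(t)\ge \tfrac{t}{4R}\rho_R(t/2)$ takes care of.
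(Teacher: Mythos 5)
The paper offers no proof of Theorem \ref{t3}: it is imported verbatim as a preliminary result with a citation to Xu \cite{d26}, so there is no in-paper argument to measure you against, and your proposal must stand on its own. It does. Your route --- establish that $\|\cdot\|^p$ is uniformly convex on bounded balls, regularize the modulus, and then convert the midpoint estimate into a subgradient estimate by dyadic telescoping --- is essentially the classical argument behind Xu's inequalities, and every step checks out: the contradiction argument for $\rho_R(\epsilon)>0$ uses only Bolzano--Weierstrass on the real sequences $\|u_n\|,\|v_n\|,\|\tfrac{u_n+v_n}{2}\|$ plus strict convexity of $t\mapsto t^p$ and the uniform convexity of $E$ applied to the normalized vectors, so nothing extraneous is smuggled in; the lower bound $g_R(t)\ge\tfrac{t}{4R}\rho_R(t/2)$ for the convex envelope is verified correctly and does yield strict monotonicity and continuity; the telescoped sum converges because $g_R\le 2^{-p}(\cdot)^p$ on $[0,2R]$ and $p>1$; and symmetrizing the resulting subgradient inequality to get (ii), then taking $g=\min\{1,2/p\}G_{2r}$ so that one function serves both parts, is exactly how these two estimates are usually coupled. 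Two small points deserve one explicit line each in a final write-up: first, the inclusion $J_p(x)\subset\partial\bigl(\tfrac1p\|\cdot\|^p\bigr)(x)$ that licenses $f'(a;b-a)\ge p\langle j_p(a),b-a\rangle$ for an arbitrary selection follows from Young's inequality $\|x\|^{p-1}\|y\|\le\tfrac1q\|x\|^p+\tfrac1p\|y\|^p$ and should be recorded, since the theorem does not assume $E$ smooth; second, the continuity of $G_R$ comes from the locally uniform convergence of the series, which is exactly the geometric bound you already wrote down. Neither is a gap, only an omission of routine detail.
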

\begin{lemma}\label{l13}
B. T. Kien \cite{b34}. The dual space $E^*$ of a Banach space $E$ is uniformly convex if and only if the duality mapping $J_p$ is a single-valued map which is uniformly continuous on each bounded subset of $E$.
\end{lemma}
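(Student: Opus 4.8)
The plan is to reduce the statement to the analogous characterization for the normalized duality map $J$ together with the classical fact, via the Lindenstrauss formula $\rho_E(\tau)=\sup_{0\le\varepsilon\le 2}\bigl(\tfrac{\tau\varepsilon}{2}-\delta_{E^*}(\varepsilon)\bigr)$, that $E$ is uniformly smooth if and only if $E^*$ is uniformly convex (both being reflexive in that case, by Milman--Pettis). The bridge between $J$ and $J_p$ is the identity $J_p(x)=\|x\|^{p-2}J(x)$ for $x\neq 0$ (with $J_p(0)=0$): it shows that $J_p$ is single-valued if and only if $J$ is, equivalently if and only if $E$ is smooth, and since $\|J_px\|=\|x\|^{p-1}$ and $\|Jx\|=\|x\|$, both maps are bounded on bounded sets and extend continuously by $0$ at the origin.

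For ``$E^*$ uniformly convex $\Rightarrow$ $J_p$ single-valued and uniformly continuous on bounded subsets of $E$'': by the above, $E$ is uniformly smooth, hence smooth, so $J_p$ is single-valued; and it is classical (see e.g. Cioranescu \cite{b3}, Xu and Roach \cite{b7}) that in a uniformly smooth space $J$ is norm-to-norm uniformly continuous on each bounded set. To transfer this to $J_p$ on a ball $B_R(0)$, I would split the estimate: if $\|x\|,\|y\|$ are both below a small threshold $a$, then $\|J_px-J_py\|\le\|x\|^{p-1}+\|y\|^{p-1}\le 2a^{p-1}$; otherwise $x$ and $y$ both lie in an annulus $a/2\le\|\cdot\|\le R$ on which $J_p=\|\cdot\|^{p-2}J$ is the product of the bounded uniformly continuous map $J$ with the scalar function $t\mapsto t^{p-2}$, which is Lipschitz on $[a/2,R]$; combining the two cases gives uniform continuity of $J_p$ on $B_R(0)$.

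For the converse, suppose $J_p$ is single-valued and uniformly continuous on bounded subsets of $E$; then $E$ is smooth, so $J$ is single-valued. Running the same splitting in reverse — using $J=\|\cdot\|^{2-p}J_p$ on the annulus and $\|Jx-Jy\|\le\|x\|+\|y\|$ near the origin — shows that $J$ is uniformly continuous on bounded subsets of $E$. By the classical characterization $E$ is then uniformly smooth, and therefore $E^*$ is uniformly convex. (Alternatively, once $E$ is known to be reflexive, smooth and strictly convex, the properties of the generalized duality maps recalled in Section~2 give $J_p^{-1}=J_q$, so $J_p$ is the inverse of the duality map on $E^*$ with gauge $s\mapsto s^{q-1}$, and one may instead apply Lemma \ref{l12} with $E$ replaced by $E^*$ and $\psi(s)=s^{q-1}$.)

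The crux is the classical equivalence ``$E$ uniformly smooth $\iff$ $J$ single-valued and uniformly continuous on bounded sets'', and specifically its nontrivial half: that uniform continuity of $J$ on bounded sets forces uniform smoothness of $E$. That is where the genuine work lies — one identifies $J$ with the (single-valued) gradient of $x\mapsto\tfrac12\|x\|^2$ and shows that uniform continuity of this gradient on the unit sphere is equivalent to uniform Fr\'{e}chet differentiability of the norm, i.e. to $\rho_E(\tau)/\tau\to 0$ as $\tau\to 0^+$. The remaining ingredients (Milman--Pettis, the Lindenstrauss duality formula, and the elementary estimates relating $J$ and $J_p$) are routine.
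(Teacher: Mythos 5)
This lemma is quoted in the paper from B.~T.~Kien's article and is given no proof there, so there is nothing internal to compare against; judged on its own terms, your argument is essentially correct and follows the route that the cited source and the standard references (Cioranescu, Xu--Roach) take for the normalized case $p=2$, namely the chain ``$E^*$ uniformly convex $\iff$ $E$ uniformly smooth $\iff$ $J$ single-valued and uniformly continuous on bounded sets,'' with your reduction from $J_p$ to $J$ via $J_p(x)=\|x\|^{p-2}J(x)$ supplying the extra step needed for general $p>1$. That reduction is sound, including the Lipschitz estimate for $t\mapsto t^{p-2}$ on $[a/2,R]$ for either sign of $p-2$. One small imprecision: as written, ``otherwise $x$ and $y$ both lie in the annulus $a/2\le\|\cdot\|\le R$'' is not a consequence of ``not both norms are below $a$'' (one of the two points could still be arbitrarily close to the origin); you need to first restrict to pairs with $\|x-y\|<\delta\le a/2$, after which one norm exceeding $a$ does force the other to exceed $a/2$. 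That is a one-line repair inside a uniform-continuity argument, not a structural gap. Your parenthetical alternative via Lemma~\ref{l12} applied to $E^*$ should be flagged as usable only for the forward implication (where reflexivity of $E$ is already available from the uniform convexity of $E^*$); for the converse it would be circular, but your main route does not rely on it.
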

\begin{lemma}\label{l15}
Kamimura and Takahashi \cite{r16}. Let $E$ be a smooth uniformly convex real Banach space and let $\left\{x_n\right\}$ and $\left\{y_n\right\}$ be two sequences from $E.$ If either $\left\{x_n\right\}$ or $\left\{y_n\right\}$ is bounded and $\phi (x_n, y_n) \rightarrow 0$ as $n \rightarrow \infty$, then $ \| x_n - y_n \| \rightarrow 0$ as $n \rightarrow \infty$.
\end{lemma}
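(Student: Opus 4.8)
The plan is to deduce the statement from the quantitative uniform convexity estimate of Theorem \ref{t3} applied with $p=2$. First note that, since $E$ is smooth, the normalized duality map $J$ is single-valued and satisfies $\langle y,Jy\rangle=\|y\|^2$ and $\|Jy\|=\|y\|$ for every $y\in E$, so $\phi$ is well defined and
\begin{equation*}
\phi(x,y)=\|x\|^2-2\langle x,Jy\rangle+\|y\|^2\ \ge\ \|x\|^2-2\|x\|\,\|y\|+\|y\|^2=(\|x\|-\|y\|)^2
\end{equation*}
for all $x,y\in E$. Assuming without loss of generality that $\{y_n\}$ is bounded, this inequality together with $\phi(x_n,y_n)\to 0$ gives $\big|\,\|x_n\|-\|y_n\|\,\big|\to 0$, so $\{x_n\}$ is bounded as well; since $\|x_n-y_n\|\le\|x_n\|+\|y_n\|$, all three sequences $\{x_n\}$, $\{y_n\}$, $\{x_n-y_n\}$ then lie in some common ball $B_r(0)$.

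Next I would invoke Theorem \ref{t3}(i) on $B_r(0)$ with $p=2$: there is a continuous, strictly increasing, convex function $g:[0,\infty)\to[0,\infty)$ with $g(0)=0$ such that, applying the inequality to $x:=y_n$ and $y:=x_n-y_n$,
\begin{equation*}
\|x_n\|^2=\|y_n+(x_n-y_n)\|^2\ \ge\ \|y_n\|^2+2\langle x_n-y_n,Jy_n\rangle+g(\|x_n-y_n\|).
\end{equation*}
Writing $\langle x_n-y_n,Jy_n\rangle=\langle x_n,Jy_n\rangle-\|y_n\|^2$ and rearranging, the last display is precisely $\phi(x_n,y_n)\ge g(\|x_n-y_n\|)$. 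Since $\phi(x_n,y_n)\to 0$ we obtain $g(\|x_n-y_n\|)\to 0$, and because $g$ is continuous and strictly increasing with $g(0)=0$, a routine subsequence argument forces $\|x_n-y_n\|\to 0$, which is the assertion.

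I expect the only delicate point to be the boundedness step: the modulus $g$ furnished by Theorem \ref{t3} depends on the radius $r$, so before applying it one must upgrade the hypothesis ``one of the two sequences is bounded'' to ``both sequences (and their difference) lie in a common ball'', and this is exactly what the elementary bound $(\|x_n\|-\|y_n\|)^2\le\phi(x_n,y_n)\to 0$ gives. Everything after that is the short algebraic identity turning $\|y_n+(x_n-y_n)\|^2$ into $\phi(x_n,y_n)$ together with the monotonicity of $g$. Alternatively one could argue directly from the modulus of convexity of $E$, but the route above is shorter given the lemmas already at hand.
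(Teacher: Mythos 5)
This is one of the quoted preliminary results (Lemma \ref{l15}, attributed to Kamimura and Takahashi \cite{r16}); the paper itself gives no proof, so there is nothing internal to compare against. Your argument is correct and is essentially the standard proof of this fact. The two ingredients are exactly right: the elementary bound $\phi(x,y)\ge(\|x\|-\|y\|)^2$ upgrades ``one sequence bounded'' to ``both sequences lie in a common ball $B_r(0)$'', and then Theorem \ref{t3}(i) with $p=2$, applied to $x:=y_n$ and $y:=x_n-y_n$, yields the key pointwise inequality $\phi(x_n,y_n)\ge g(\|x_n-y_n\|)$ on that ball, after which the strict monotonicity and continuity of $g$ at $0$ finish the job. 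This is in substance the original Kamimura--Takahashi argument, which likewise first establishes $g(\|x-y\|)\le\phi(x,y)$ on bounded sets as a consequence of uniform convexity; you have merely routed that inequality through Xu's estimate, which is available among the paper's stated preliminaries. You are also right to flag the only delicate point, namely that $g$ depends on the radius $r$, which is why the boundedness upgrade must precede the appeal to Theorem \ref{t3}. No gaps.
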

\begin{theorem}\label{t8}
Kido \cite{b30}. Let $E^*$ be a real strictly convex dual Banach space with a Fr$\acute{e}$chet differentiable norm and $A$ a maximal monotone operator from $E$ into $E^*$ such that $A^{-1}0\neq \emptyset$. Let $J_tx:=(J+tA)^{-1}x$ be the resolvent of $A$ and $P$ be the nearest point retraction of $E$ onto $A^{-1}0$. Then, for every $x\in E$, $J_tx$ converges strongly to $Px$ as $t\rightarrow \infty$.
\end{theorem}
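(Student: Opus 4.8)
The plan is to transplant the classical Hilbert-space argument for the asymptotic behaviour of resolvents to this setting. First I would unpack the hypotheses. Since the norm of $E^*$ is Fr\'echet differentiable, \v{S}mulian's criterion shows that every element of $E^{**}$ attains its norm on the unit ball of $E^*$, so by James' theorem $E^*$ --- hence $E$ --- is reflexive; moreover $E^*$ strictly convex forces $E$ smooth, and $E^*$ smooth (Fr\'echet differentiable implies G\^ateaux differentiable) forces $E$ strictly convex. Thus $J:E\to E^*$ is a single-valued bijection which is strictly monotone and norm-to-weak$^*$ continuous, $J^{-1}$ is single-valued, and, $A$ being maximal monotone, $A^{-1}0$ is closed and convex, so the retraction $P$ of $E$ onto $A^{-1}0$ is well defined and single-valued. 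Now fix $x\in E$ and $z\in A^{-1}0$ and write $z_t:=J_tx$, so that $\tfrac1t(Jx-Jz_t)\in Az_t$. Testing monotonicity of $A$ against $(z,0)\in\operatorname{graph}(A)$ gives $\langle Jx-Jz_t,\,z_t-z\rangle\ge 0$; expanding this and using $\langle Jz_t,z_t\rangle=\|z_t\|^2$ and $\|Jz_t\|=\|z_t\|$ yields the quadratic inequality $\|z_t\|^2\le(\|x\|+\|z\|)\|z_t\|+\|x\|\,\|z\|$, so $\{z_t:t>0\}$ is bounded. Consequently $\{Jz_t\}$ is bounded in $E^*$ and $a_t:=\tfrac1t(Jx-Jz_t)\to 0$ in $E^*$ as $t\to\infty$, with $a_t\in Az_t$.

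Next I would control the weak cluster points. Let $t_n\to\infty$ be arbitrary; by reflexivity pass to a subsequence with $z_{t_{n_k}}\rightharpoonup p$. Since $a_{t_{n_k}}\in Az_{t_{n_k}}$, $a_{t_{n_k}}\to 0$, and the graph of a maximal monotone operator is closed for the product of the weak topology of $E$ and the norm topology of $E^*$, we get $0\in Ap$, i.e. $p\in A^{-1}0$. Putting $z=p$ in $\langle Jx-Jz_t,\,z_t-z\rangle\ge 0$ rewrites as $\langle Jx-Jp,\,z_{t_{n_k}}-p\rangle\ge\langle Jz_{t_{n_k}}-Jp,\,z_{t_{n_k}}-p\rangle\ge 0$; the left-hand side tends to $0$ by weak convergence, so $\langle Jz_{t_{n_k}}-Jp,\,z_{t_{n_k}}-p\rangle\to 0$. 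Using $\langle Jz_{t_{n_k}},p\rangle\le\|z_{t_{n_k}}\|\,\|p\|$ and $\langle Jp,z_{t_{n_k}}\rangle\to\|p\|^2$ one then reads off $\limsup_k\|z_{t_{n_k}}\|\le\|p\|$, so $\|z_{t_{n_k}}\|\to\|p\|$ by weak lower semicontinuity of the norm.

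With $z_{t_{n_k}}\rightharpoonup p$ and $\|z_{t_{n_k}}\|\to\|p\|$, the Fr\'echet differentiability of the norm of $E^*$ delivers norm convergence: assuming $p\ne 0$ (the case $p=0$ being immediate), set $f:=Jp/\|p\|\in E^*$, a unit vector; the unit vectors $\hat z_k:=z_{t_{n_k}}/\|z_{t_{n_k}}\|$ satisfy $\langle f,\hat z_k\rangle\to 1=\|f\|$, so \v{S}mulian's criterion applied at $f$ forces $(\hat z_k)$ to converge in norm, whence $z_{t_{n_k}}=\|z_{t_{n_k}}\|\,\hat z_k\to p$ in norm (equivalently: $E$ has the Kadec--Klee property, which the hypotheses guarantee). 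Now I would pass to the limit in $\langle Jx-Jz_{t_{n_k}},\,z_{t_{n_k}}-z\rangle\ge 0$ for each fixed $z\in A^{-1}0$ --- legitimate since $z_{t_{n_k}}\to p$ strongly, $\{Jz_{t_{n_k}}\}$ is bounded and $J$ is norm-to-weak$^*$ continuous --- to obtain $\langle Jx-Jp,\,p-z\rangle\ge 0$ for every $z\in A^{-1}0$. By strict convexity of $E$ (hence strict monotonicity of $J$) this variational inequality has a unique solution, namely the nearest point retraction $Px$, so $p=Px$. Since every sequence $t_n\to\infty$ has a subsequence along which $J_{t_n}x\to Px$ and the limit $Px$ is independent of the subsequence, $J_tx\to Px$ strongly as $t\to\infty$.

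I expect the step from weak to strong convergence in the third paragraph to be the main obstacle: the manipulations up to $\|z_{t_{n_k}}\|\to\|p\|$ are formal and valid in any reflexive smooth strictly convex space, whereas converting $z_{t_{n_k}}\rightharpoonup p$ together with convergence of the norms into norm convergence is exactly where the hypothesis ``$E^*$ strictly convex with Fr\'echet differentiable norm'' is genuinely used --- through James' and \v{S}mulian's theorems, or, packaged differently, through the Kadec--Klee property of $E$ and the norm-to-norm continuity of $J^{-1}$. A secondary point needing care is to confirm that the variational inequality $\langle Jx-Jp,\,p-z\rangle\ge 0$ obtained in the limit is in fact the relation defining the nearest point retraction $P$ in this non-Hilbertian setting, rather than of some other projection onto $A^{-1}0$.
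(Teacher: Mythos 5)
First, a point of comparison you could not have known: the paper contains no proof of Theorem \ref{t8}. It is quoted as a preliminary from Kido \cite{b30}, and it is invoked in the proof of Theorem \ref{e14} only through the fact that the resolvents $(J_p^W+\tfrac{1}{\theta_n}A)^{-1}J_p^W w_1$ converge strongly to \emph{some} element of $A^{-1}0$; the description of the limit as a nearest point is never exploited. So your argument can only be judged on its own merits. On those merits it is the standard proof and is essentially sound: the deduction of reflexivity, smoothness and strict convexity of $E$ from the hypotheses on $E^*$; the boundedness of $z_t$ from testing monotonicity against $(z,0)$; the demiclosedness of the graph of $A$ giving $p\in A^{-1}0$ for weak subsequential limits; the computation forcing $\|z_{t_{n_k}}\|\to\|p\|$; the \v{S}mulian/Kadec--Klee upgrade to norm convergence (this is indeed exactly where Fr\'echet differentiability of the dual norm enters); and the subsequence-independence of the limit via strict monotonicity of $J$ are all correct. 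You also silently made the right repair of a typo in the statement: the resolvent must be $(J+tA)^{-1}Jx$ rather than $(J+tA)^{-1}x$ for the types to match, and that is the reading consistent with the rest of the paper.

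The one genuine gap is the point you yourself flag at the end, and it is not secondary: the variational inequality $\langle Jx-Jp,\,p-z\rangle\ge 0$ for all $z\in A^{-1}0$ characterizes Alber's generalized projection of $x$ onto $A^{-1}0$ (the minimizer of $z\mapsto\phi(z,x)$ in the notation of Definition \ref{d1}), whereas the metric (nearest point) projection is characterized by $\langle J(x-Px),\,Px-z\rangle\ge 0$. Outside Hilbert space these are in general different retractions, so your argument as written does not prove the conclusion with $P$ literally the nearest point retraction, and no further manipulation of your inequalities will close that gap, because the limit you construct genuinely is the generalized projection. The resolution is that the statement, not your method, is at fault: the retraction obtained as $\lim_{t\to\infty}(J+tA)^{-1}Jx$ is the generalized projection onto $A^{-1}0$, and ``nearest point retraction'' in the quoted theorem should be read in that sense (or the theorem restated accordingly). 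Since the paper only ever uses strong convergence to a zero of $A$, your proof establishes everything that is actually needed downstream.
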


\section{Main Results}
 We give and prove the following lemmas which are useful in establishing our main result.
\begin{lemma}\label{e6} 
Suppose $E$ is a Banach space with the dual $E^*.$ Let $F : E\rightarrow E^*$ and $K : E^*\rightarrow E$ be mappings such that $D(K)=R(F)$ and the following conditions hold:
 \begin{itemize}
	\item[(i)] For each $u_1, u_2\in E,$ there exists a strictly increasing function 	${\Phi}_1: [0,\infty)\rightarrow [0,\infty)$ with ${\Phi}_1(0)=0$ such that
	$$\left\langle Fu_1-Fu_2, u_1-u_2\right\rangle \geq {\Phi}_1(\|u_1-u_2\|);$$
	\item[(ii)]For each $v_1, v_2\in E^*,$ there exists a strictly increasing function 	${\Phi}_2: [0,\infty)\rightarrow [0,\infty)$ with ${\Phi}_2(0)=0$ such that
	$$\left\langle Ku_1-Ku_2, v_1-v_2\right\rangle \geq {\Phi}_2(\|v_1-v_2\|);$$
	\item[(iii)]${\Phi}_i(t)\geq r_it$ for $t\in [0,\infty)$ and $r_i>0, ~i=1,2.$
 \end{itemize}
 Let $W:=E\times E^*$ with norm $\|w\|_W:=\|u\|_E+\|v\|_{E^*}$ for $w=(u, v)\in W.$ Define a mapping $A: W\rightarrow W^*$ by $Aw:=(Fu-v, u+Kv).$ 
  \begin{itemize}
	\item[(i)]Then for each $w_1, w_2\in W,$ there exists a strictly increasing function $\Phi: [0,\infty)\rightarrow [0,\infty)$ with $\Phi(0)=0$ such that
$$\left\langle Aw_1-Aw_2, w_1-w_2\right\rangle\geq \Phi(\|w_1-w_2\|);$$
	\item[(ii)] Suppose that $F$ and $K$ are bounded mappings, then $A$ is a bounded map.
\end{itemize}	
\end{lemma}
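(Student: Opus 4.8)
The plan is to treat the two assertions separately: the first by a direct expansion of the duality pairing on $W$ followed by an elementary comparison of gauge functions, the second by an immediate estimate.

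For part (i), I would write $w_1=(u_1,v_1)$, $w_2=(u_2,v_2)$ with $u_i\in E$, $v_i\in E^*$, and compute
$$Aw_1-Aw_2=\big((Fu_1-Fu_2)-(v_1-v_2),\ (u_1-u_2)+(Kv_1-Kv_2)\big).$$
Using the duality pairing on $W=E\times E^*$, namely $\langle(\xi,x),(u,v)\rangle=\langle\xi,u\rangle+\langle v,x\rangle$ (with $E$ identified with its canonical image in $E^{**}$ so that $Aw\in W^*$), the cross terms $\pm\langle v_1-v_2,u_1-u_2\rangle$ cancel and one is left with
$$\langle Aw_1-Aw_2,w_1-w_2\rangle=\langle Fu_1-Fu_2,u_1-u_2\rangle+\langle Kv_1-Kv_2,v_1-v_2\rangle\ \ge\ \Phi_1(\|u_1-u_2\|)+\Phi_2(\|v_1-v_2\|)$$
by hypotheses (i) and (ii). The remaining task is to bound the right-hand side below by $\Phi(\|w_1-w_2\|_W)$, where $\|w_1-w_2\|_W=\|u_1-u_2\|+\|v_1-v_2\|$. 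Putting $a=\|u_1-u_2\|$, $b=\|v_1-v_2\|$, I would use that $\max\{a,b\}\ge\tfrac12(a+b)$, so at least one of $\Phi_1(a)\ge\Phi_1(\tfrac12(a+b))$ or $\Phi_2(b)\ge\Phi_2(\tfrac12(a+b))$ holds (here monotonicity and nonnegativity of $\Phi_1,\Phi_2$ are used), whence $\Phi_1(a)+\Phi_2(b)\ge\min\{\Phi_1(\tfrac{a+b}{2}),\Phi_2(\tfrac{a+b}{2})\}$. I would then set
$$\Phi(t):=\min\{\Phi_1(t/2),\ \Phi_2(t/2)\},\qquad t\ge0,$$
which is strictly increasing (a minimum of strictly increasing functions), satisfies $\Phi(0)=0$, and — invoking (iii) — also obeys $\Phi(t)\ge\tfrac12\min\{r_1,r_2\}\,t$, a linear lower bound convenient for the range-condition statements proved later in the paper. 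This establishes the first claim.

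For part (ii), I would argue directly: if $B\subset W$ is bounded, $\|w\|_W\le M$ on $B$ forces $\|u\|_E\le M$ and $\|v\|_{E^*}\le M$ for $w=(u,v)\in B$; boundedness of $F$ and $K$ then yields a constant $M'$ with $\|Fu\|_{E^*},\|Kv\|_E\le M'$ on $B$, so each component of $Aw=(Fu-v,\ u+Kv)$ has norm at most $M+M'$, and hence $\|Aw\|_{W^*}$ is bounded on $B$ (whichever dual norm one puts on $W^*$). Thus $A$ is bounded.

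I expect the only non-mechanical point to be the elementary inequality $\Phi_1(a)+\Phi_2(b)\ge\Phi(a+b)$: the crux is the observation $\max\{a,b\}\ge\tfrac12(a+b)$ together with a two-case split, after which defining $\Phi$ as the minimum of the two rescaled gauges does the job; everything else (the cancellation in the pairing, the boundedness estimate) is routine. A small care point is keeping track of the embedding $E\hookrightarrow E^{**}$ so that $A$ genuinely maps into $W^*$.
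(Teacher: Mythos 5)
Your proof is correct, and the core of it (expanding the duality pairing on $W=E\times E^*$, observing that the cross terms $\pm\langle v_1-v_2,u_1-u_2\rangle$ cancel, and reducing to $\Phi_1(\|u_1-u_2\|)+\Phi_2(\|v_1-v_2\|)$) coincides exactly with the paper's computation; part (ii) is likewise the same one-line boundedness estimate. Where you diverge is in the final comparison step: the paper invokes hypothesis (iii) immediately, writing $\Phi_1(a)+\Phi_2(b)\ge r_1a+r_2b\ge\min\{r_1,r_2\}(a+b)$ and simply defining $\Phi(t):=\min\{r_1,r_2\}\,t$, whereas you use the elementary observation $\max\{a,b\}\ge\tfrac12(a+b)$ together with monotonicity and nonnegativity of the gauges to define $\Phi(t):=\min\{\Phi_1(t/2),\Phi_2(t/2)\}$. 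Your route is slightly more general: it proves part (i) without using hypothesis (iii) at all (you only cite (iii) afterwards to recover a linear lower bound for $\Phi$), so it shows that the generalized $\Phi$-strong monotonicity of $A$ follows from that of $F$ and $K$ alone; the trade-off is that the paper's $\Phi$ is linear, which is the form actually exploited later (e.g.\ in the contraction argument of Lemma 3.3), so if you adopt your construction you should keep the remark that (iii) still yields $\Phi(t)\ge\tfrac12\min\{r_1,r_2\}\,t$, as you do.
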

\begin{proof}
\begin{itemize}
	\item[(i)] Define $\Phi: [0,\infty)\rightarrow[0,\infty)$ by $\Phi(t):=\min\left\{r_1, r_2 \right\}t$ for each $t\in [0, \infty).$ Clearly, $\Phi$ is a strictly increasing function with $\Phi(0)=0.$ For $w_1 = (u_1, v_1), ~~w_2 = (u_2, v_2) \in W$, we have $Aw_1 = \left(Fu_1 - v_1, Kv_1 + u_1 \right)$ and  $Aw_2 = \left(Fu_2 - v_2, Kv_2 + u_2 \right)$ such that
 $$ Aw_1 - Aw_2 = \left( Fu_1 - Fu_2 - (v_1 -v_2), Kv_1 - K v_2 + (u_1 - u_2)  \right).$$ 
 Therefore, the following estimate follows from the properties of $F$ and $K.$
 \begin{eqnarray*}
 \left\langle Aw_1 - Aw_2, w_1 - w_2  \right\rangle &=& \left\langle Fu_1 - Fu_2 - (v_1 -v_2), u_1 -u_2 \right\rangle \\
& &+ \left\langle Kv_1 - K v_2 + (u_1 - u_2), v_1 -v_2 \right\rangle \\
&=& \left\langle Fu_1 - Fu_2, u_1 -u_2\right\rangle - \left\langle v_1 -v_2, u_1 -u_2 \right\rangle \\
& &+  \left\langle Kv_1 - K v_2, v_1 -v_2 \right\rangle +  \left\langle u_1 - u_2, v_1 -v_2 \right\rangle \\
&\geq &{\Phi}_1(\|u_1-u_2\|)+ {\Phi}_2(\|v_1-v_2\|)\\
&\geq &r_1\|u_1-u_2\|+ r_2\|v_1-v_2\|\\
&\geq & \min \left\{r_1, r_2\right\}(\| u_1-u_2 \|+\| v_1-v_2 \|) \\
&= & \Phi(\| w_1-w_2 \|).
\end{eqnarray*}
	\item[(ii)] By the definition of $A,$ it is a bounded map since $F$ and $K$ are bounded mappings.
\end{itemize}
\end{proof}
\begin{remark}\label{e7}
Recall that a mapping $A : E \rightarrow E^*$ is said to be strongly monotone if there exists a constant $k\in(0, 1)$ such that
 $$\left\langle Ax -Ay, x-y\right\rangle \geq k{\|x-y \|}^2 \ \ \forall \ \ x, y \in D(A).$$
 Therefore for a strongly monotone mapping, it is required that the norm on $W$ be defined as $$\|w\|_W^2:={\| u \|}_E^2+{\| v\|}^2_{E^*}.$$
\end{remark}

\par An analogue of Lemma {\bf 2.5}, Chidume and Djitte, \cite{e33}, which was proved in a Hilbert space is given below in a uniformly smooth and uniformly convex Banach space.

\begin{lemma}\label{e8}
Let $E$ be a  uniformly smooth and uniformly convex Banach space with dual $E^*.$ Suppose $D(A)=E$ and $A: E\rightarrow 2^{E^*}$ is a multivalued generalized $\Phi$-strongly monotone mapping such that $R(J_p+t_0A)=E^*$ for some $t_0>0.$ Then $A$ satisfies the range condition, that is, $R(J_p + tA)=E^*$ for all $t >0.$
\end{lemma}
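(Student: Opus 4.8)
\textbf{Overall strategy.} The plan is to use a continuation (open–closed) argument in the parameter $t$. Define
$$S := \left\{ t > 0 : R(J_p + tA) = E^* \right\}.$$
We know $t_0 \in S$, so $S \neq \emptyset$. I would show that $S$ is both open and closed in $(0,\infty)$; since $(0,\infty)$ is connected, this forces $S = (0,\infty)$. The generalized $\Phi$-strong monotonicity is what keeps the construction from collapsing: for fixed $t$, the mapping $J_p + tA$ is generalized $\Phi$-strongly monotone (the $J_p$ part contributes a nonnegative term by strict monotonicity of $J_p$, coming from strict convexity of $E$ via property (ii) of the duality map), hence injective, and more importantly the preimages stay in a bounded set whose radius is controlled uniformly as $t$ varies over a compact subinterval. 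The uniform smoothness/convexity hypotheses give the continuity of $J_p$ and $J_q = J_p^{-1}$ on bounded sets (Lemma \ref{l12}, Lemma \ref{l13}), which is needed to pass to the limit.

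\textbf{Key steps.} First I would record the coercivity estimate: if $x \in E$ satisfies $J_p x + t \mu = f$ for some $\mu \in Ax$ and some fixed reference point, say $x_1 \in A^{-1}(\xi)$ (one may fix any $x_1$ with $\xi \in Ax_1$, obtained from solvability at $t_0$), then pairing $J_p x - J_p x_1 + t(\mu - \xi) = f - f_1$ with $x - x_1$ and using monotonicity of $J_p$ plus $\langle \mu - \xi, x - x_1 \rangle \ge \Phi(\|x - x_1\|)$ gives $t\,\Phi(\|x-x_1\|) \le \|f - f_1\|\,\|x - x_1\|$, whence $\|x - x_1\|$ is bounded in terms of $\|f\|$, $t$, and $\Phi$ — and this bound is uniform for $t$ in any interval $[a,b] \subset (0,\infty)$ and $f$ in any ball. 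Second, for \emph{openness}: fix $t \in S$ and $f \in E^*$; I want to solve $J_p x + s A x \ni f$ for $s$ near $t$. Rewrite this as $J_p x + t A x \ni f - (s - t)A x$, i.e. $x = (J_p + tA)^{-1}\big(f - (s-t)\mu\big)$ for an appropriate selection $\mu \in Ax$; set up the fixed-point map on the closed ball furnished by the coercivity estimate and show it is a contraction (or use a degree/Browder-type surjectivity argument) for $|s - t|$ small, using boundedness of $A$ on bounded sets — note $A$ is implicitly bounded here since it is generalized $\Phi$-strongly monotone with full domain on a reflexive space, or one invokes the local boundedness of monotone operators. Third, for \emph{closedness}: take $t_n \in S$, $t_n \to t > 0$, and $f \in E^*$; pick $x_n$ with $J_p x_n + t_n \mu_n = f$, $\mu_n \in Ax_n$. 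By the uniform coercivity bound $\{x_n\}$ is bounded, hence (reflexivity) has a weakly convergent subsequence $x_n \rightharpoonup x$; then $\mu_n = (f - J_p x_n)/t_n$ is bounded, and using demiclosedness of the maximal monotone operator $A$ (generalized $\Phi$-strong monotone operators with full domain are maximal monotone) together with norm-to-weak$^*$ continuity of $J_p$, one identifies the weak limit of $\mu_n$ as an element of $Ax$ satisfying $J_p x + t\mu = f$, so $t \in S$.

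\textbf{Main obstacle.} The delicate point is the closedness step: passing to the limit in $J_p x_n + t_n \mu_n = f$ requires knowing that $A$ is \emph{demiclosed} (graph closed in the strong$\times$weak$^*$ topology), which in turn is where maximal monotonicity of $A$ is genuinely used — a generalized $\Phi$-strongly monotone mapping with $D(A) = E$ on a reflexive Banach space is maximal monotone, and for maximal monotone operators the graph is sequentially closed in (strong)$\times$(weak$^*$). One must also be careful that the selections $\mu_n$ can be taken consistently (they are forced, $\mu_n = (f - J_p x_n)/t_n$, so there is no choice issue), and that the strong convergence $x_n \to x$ can actually be upgraded from weak convergence — this uses $\langle J_p x_n - J_p x, x_n - x\rangle \to 0$ via the monotone-operator inequality and then the $S_+$-type property of $J_p$ on uniformly convex spaces (or Lemma \ref{l15}/Lemma \ref{l12}). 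The openness step is comparatively routine once the uniform coercive bound localizes everything to a fixed ball on which $A$ is bounded.
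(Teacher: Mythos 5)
Your route (a continuation argument: show $S=\{t>0: R(J_p+tA)=E^*\}$ is open and closed in $(0,\infty)$) is genuinely different from the paper's. The paper never passes to a limit in $t$: it forms the resolvent $J_{p_{t_0}}=(J_p+t_0A)^{-1}J_p$, shows it is firmly nonexpansive type and hence nonexpansive, rewrites $J_px+tAx\ni w^*$ as the fixed-point equation $x=J_{p_{t_0}}\bigl(\tfrac{t_0}{t}w^*+(1-\tfrac{t_0}{t})J_px\bigr)$, and applies the contraction principle to get $(t_0/2,\infty)\subset S$ in one stroke, then iterates (halving $t_0$ each time) to exhaust $(0,\infty)$. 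That scheme needs only monotonicity, no a priori bounds, no maximality, and no weak compactness; it corresponds to your openness step alone, made global enough that closedness is never required.

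There are two concrete gaps in your version. First, the coercivity estimate: from $t\,\Phi(\|x-x_1\|)\le \|f-f_1\|\,\|x-x_1\|$ you cannot conclude that $\|x-x_1\|$ is bounded. The definition only requires $\Phi$ to be strictly increasing with $\Phi(0)=0$; it may be bounded (e.g. $\Phi(s)=\arctan s$), in which case $\Phi(s)/s\to 0$ and the inequality is vacuous for large $\|x-x_1\|$. The bound must instead come from the $J_p$ term, via $\left\langle J_px-J_px_1,\,x-x_1\right\rangle\ge\bigl(\|x\|^{p-1}-\|x_1\|^{p-1}\bigr)\bigl(\|x\|-\|x_1\|\bigr)$, which grows like $\|x\|^p$; discarding the monotone $A$-term then yields the needed a priori bound. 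Second, and more seriously, the openness step: the map $x\mapsto (J_p+tA)^{-1}\bigl(f-(s-t)\mu\bigr)$ is not a well-defined single-valued contraction as written, because (a) the selection $x\mapsto\mu\in Ax$ of a multivalued operator need not be continuous, and (b) $(J_p+tA)^{-1}$ as a map from $E^*$ to $E$ is not Lipschitz in a non-Hilbert space — only the resolvent $(J_p+tA)^{-1}J_p:E\to E$ is nonexpansive, and composing it with $J_q$ (merely uniformly continuous on bounded sets, by Lemma \ref{l12}) destroys any contraction constant. The paper's reformulation, which perturbs inside the argument of $J_{p_{t_0}}$ in the form $\tfrac{t_0}{t}w^*+(1-\tfrac{t_0}{t})J_px$ rather than perturbing $Ax$ itself, is the standard way around (a); issue (b) persists there too, but if you adopt that reformulation your openness step becomes the paper's argument and the closedness step — with its appeals to maximality, demiclosedness and the $S_+$-property, and with the further unjustified claim that full domain alone gives maximal monotonicity (it does not; maximality here should be deduced from the assumed surjectivity of $J_p+t_0A$) — becomes unnecessary.
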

\begin{proof}
By the strict convexity of $E,$ we obtain for every $x \in E,$ there exist unique $x_{t_0} \in E$ and such that
$$J_px \in J_px_{t_0} + t_0 Ax_{t_0}.$$
Taking $J_{p_{_{{t}_0}}}( x )= x_{t_0}$, one can define a single-valued mapping $J_{p_{_{{t}_0}}} : E\rightarrow D(F)$ by $$J_{p_{_{{t}_0}}}:= (J_p + t_0 A)^{-1}J_p.$$  
$J_{p_{_{{t}_0}}}$ is called the resolvent of $A$. It is known that $(J_p + t_0A)$ is a bijection since it is monotone and $R(J_p+t_0A)=E^*.$   Since $E$ is a smooth and strictly convex Banach space and $A: E \rightarrow 2^{E^*}$ is such that $R(J_p+t_0A)=E^*$, for each $t_0>0$, one can verify that the resolvent $J_{p_{_{{t}_0}}}$ of $A,$ defined by 
$$J_{p_{_{{t}_0}}}(x)=\left\{z\in E:J_px\in J_pz+ t_0 Az \right\}=\left\{\left(J_p+t_0A\right)^{-1}J_px\right\}$$
for all $x\in E$ is a firmly nonexpansive type map. Infact, for $x_1, x_2\in E$ and $t_0 >0,$ and for every $J_{p_{_{{t}_0}}}(x_1), ~ J_{p_{_{{t}_0}}}(x_2) \in D(F),$ we have that $\frac{J_px_1-J_p(J_{p_{_{{t}_0}}}(x_1))}{t_0}, ~\frac{J_px_2-J_p(J_{p_{_{{t}_0}}}(x_2))}{t_0} \in A,$ and generalized $\Phi$-strongly monotonicity property of $A$ gives, 
 \begin{eqnarray*}
&&\left\langle \frac{J_px_1-J_p(J_{p_{_{{t}_0}}}(x_1))}{t_0}-\frac{J_px_2-J_p(J_{p_{_{{t}_0}}}(x_2))}{t_0}, ~~ J_{p_{_{{t}_0}}}(x_1)-J_{p_{_{{t}_0}}}(x_2)\right\rangle \geq\\
 &&{\Phi}(\|J_{p_{_{{t}_0}}}(x_1)-J_{p_{_{{t}_0}}}(x_2)\|)\geq0.
\end{eqnarray*}
 Consequently, 
  \begin{eqnarray}\label{e36}
&&\left\langle J_p(J_{p_{_{{t}_0}}}(x_1))- J_p(J_{p_{_{{t}_0}}}(x_2)), ~~ J_{p_{_{{t}_0}}}(x_1)-J_{p_{_{{t}_0}}}(x_2)\right\rangle \leq \nonumber\\ 
&& \left\langle J_px_1-J_px_2, ~~ J_{p_{_{{t}_0}}}(x_1)-J_{p_{_{{t}_0}}}(x_2)\right\rangle.
 \end{eqnarray}
Thus, the resolvent $J_{p_{_{{t}_0}}}$ is a  firmly nonexpansive type map. A simple computation from (\ref{e36}) shows that for $x, y \in E,$
 \begin{eqnarray}\label{ee60}
\|J_{p_{_{{t}_0}}}(x)-J_{p_{_{{t}_0}}}(y)|\leq \|x-y\|.
 \end{eqnarray}
We claim that
 $$R(J_p+t A)=E^*$$
 for any $t > \frac{{t}_0}{2}.$ Indeed, let $t > \frac{{t}_0}{2},$ for every $x\in E,$ we solve the equation
\begin{equation}\label{e9}
J_px+t Ax = w^*, ~x^*\in E^*.
\end{equation}
Notice that $x\in E$ is a solution of (\ref{e9}) provided that 
 \begin{eqnarray*}
J_px+t_0Ax = \frac{t_0}{t}w^*+(1-\frac{t_0}{t})J_px,
 \end{eqnarray*}
 which is equivalent to
$$x = J_{p_{_{{t}_0}}}\left(\frac{t_0}{t}w^*+(1-\frac{t_0}{t})J_px\right).$$
 By the contraction mapping principle, Eq.(\ref{e9}) has a unique solution since $|1-\frac{{t}_0}{t}|<1$ and this justifies the claim. It is given that $A$ is a monotone mapping and $R(J_p+{t}_0A)=E^*$ for some ${t}_0>0.$ By the claim, it follows that $R(J_p+ tA)=E^*$ for any $t>\frac{{t}_0}{2}.$ By induction, we therefore have that $R(J_p+tA)=E^*$ for any $t>\frac{{t}_0}{2^n}$ and any $n\in \N.$ Thus, $R(J_p+tA)=E^*$ for any $t>0.$
\end{proof}

\begin{lemma}\label{e13} 
Let $E$ be a uniformly smooth and uniformly convex real Banach space and denote the dual space by $E^*.$ Suppose $F: E\rightarrow E^*$ is a generalized ${\Phi}_1$-strongly monotone mapping such that $R(J_p+{t}_1F)=E^*$ for all ${t}_1>0$ and $K: E^*\rightarrow E$ is a generalized ${\Phi}_2$-strongly monotone mapping such that $R(J_q+{t}_2K)=E$ for all ${t}_2>0$. Let $W := E\times E^*$ with norm ${\|w \|}_W := {\| u\|}_{E} + {\| v \|}_{E^*} ~ \forall ~ w = \left(u, v\right) \in W$ and define a map $A: W\rightarrow W^*$ by
\begin{equation}
Aw = \left(Fu - v, Kv + u \right), \forall ~ w=(u, v)\in W,
\end{equation}
then $R(J_p + tA)=W^*$ for all $t>0.$
\end{lemma}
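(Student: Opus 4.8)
The plan is to fit the operator $A$ into the setting of Lemma \ref{e8} and thereby reduce the whole statement to a single ``base case'' surjectivity that can be handled by a contraction argument. First I would verify the two hypotheses of Lemma \ref{e8}. That $W$ (taken with the product norm for which it is uniformly smooth and uniformly convex, cf.\ Lemma \ref{e18}(i) applied with $X=E$, $Y=E^*$, the latter being uniformly smooth and uniformly convex since $E$ is) is a uniformly smooth and uniformly convex real Banach space is Lemma \ref{e18}, which also gives that its generalized duality map $J_p^W$ is single-valued with the product form $J_p^W(u,v)=(J_p u,\,J_p^{E^*}v)$. That $A$ is generalized $\Phi$-strongly monotone on $W$ is obtained as in the proof of Lemma \ref{e6}(i): for $w_1=(u_1,v_1)$, $w_2=(u_2,v_2)$ the two cross terms coming from the $-v$ and $+u$ parts of $A$ cancel, so $\langle Aw_1-Aw_2,w_1-w_2\rangle=\langle Fu_1-Fu_2,u_1-u_2\rangle+\langle Kv_1-Kv_2,v_1-v_2\rangle\geq{\Phi}_1(\|u_1-u_2\|)+{\Phi}_2(\|v_1-v_2\|)$; putting $\Phi(t):=\min\{{\Phi}_1(t/2),{\Phi}_2(t/2)\}$, which is strictly increasing with $\Phi(0)=0$, and using that at least one of $\|u_1-u_2\|,\ \|v_1-v_2\|$ is $\geq\tfrac12\|w_1-w_2\|_W$, the right-hand side is $\geq\Phi(\|w_1-w_2\|_W)$. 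By Lemma \ref{e8} it then suffices to show that $R(J_p^W+t_0A)=W^*$ for \emph{some} $t_0>0$.

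For the base case I would fix $w^*=(w_1^*,w_2^*)\in W^*=E^*\times E$ and write $J_p^Ww+t_0Aw=w^*$ componentwise for $w=(u,v)$, that is $(J_p+t_0F)u=w_1^*+t_0v$ in $E^*$ and $(J_p^{E^*}+t_0K)v=w_2^*-t_0u$ in $E$. By the range conditions on $F$ and $K$ these operators are onto $E^*$ and $E$ respectively, and each is injective, being the sum of the strictly monotone duality map (strict convexity of $E$, resp.\ $E^*$) and a monotone map; hence each is a bijection. Writing the inverses via the firmly nonexpansive-type resolvents of $F$ and $K$ constructed in the proof of Lemma \ref{e8}, the system becomes the single fixed-point equation $u=(J_p+t_0F)^{-1}\big(w_1^*+t_0\,(J_p^{E^*}+t_0K)^{-1}(w_2^*-t_0u)\big)$ on $E$. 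I would then show this map is a strict contraction once $t_0$ is small: using the nonexpansiveness estimate (\ref{ee60}) for those resolvents together with the uniform continuity of the duality maps on bounded sets (Lemmas \ref{l12}, \ref{l13}), and noting that each of the two coupling terms carries a factor $t_0$, the Lipschitz constant of the composed map is $O(t_0^2)$ and hence $<1$ for $t_0$ small. The contraction mapping principle then gives a unique fixed point $u$, hence a solution $w=(u,v)$ of $J_p^Ww+t_0Aw=w^*$; as $w^*$ was arbitrary, $R(J_p^W+t_0A)=W^*$, and Lemma \ref{e8} upgrades this to $R(J_p^W+tA)=W^*$ for all $t>0$.

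The step I expect to be the main obstacle is precisely this contraction estimate: one must bound the modulus of continuity of the inverse resolvents $(J_p+t_0F)^{-1}$ and $(J_p^{E^*}+t_0K)^{-1}$ — ideally by a genuine Lipschitz constant — uniformly over the bounded set on which the iterates live, and then keep careful track of the two $t_0$-factors so that the composed constant drops below $1$. This is the place where one wants a quantitative strengthening of $\Phi$-monotonicity, such as a bound ${\Phi}_i(t)\geq r_it$ as in Lemma \ref{e6}(iii) (which forces $\langle(J_p+t_0F)x_1-(J_p+t_0F)x_2,x_1-x_2\rangle\geq t_0r_1\|x_1-x_2\|$ and hence Lipschitz control on the inverse); alternatively one first establishes an a priori bound on the fixed-point iterates from the generalized $\Phi$-strong monotonicity of $A$ and then applies the uniform-continuity lemmas on that fixed ball. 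The remaining ingredients — the cancellation of the cross terms, checking that $\Phi$ is a gauge, and invoking Lemma \ref{e8} — are routine.
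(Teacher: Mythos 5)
Your overall strategy is the same as the paper's: check that $W$ is uniformly smooth and uniformly convex (Lemma \ref{e18}), check that $A$ is generalized $\Phi$-strongly monotone, prove surjectivity of $J_p^W+t_0A$ for one small $t_0$ by a Banach contraction built from the resolvents of $F$ and $K$, and then invoke Lemma \ref{e8} to pass to all $t>0$. Two points of comparison are worth recording. First, your construction of the gauge, $\Phi(t):=\min\{{\Phi}_1(t/2),{\Phi}_2(t/2)\}$ together with the observation that one of $\|u_1-u_2\|,\|v_1-v_2\|$ is at least $\tfrac12\|w_1-w_2\|_W$, is actually an improvement on the paper, which simply cites Lemma \ref{e6}(i); that lemma assumes the linear lower bounds ${\Phi}_i(t)\geq r_it$, which are \emph{not} among the hypotheses of the present statement, so your argument closes a hypothesis mismatch that the paper leaves open. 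Second, in the base case the paper does not substitute one equation into the other as you do: it runs a simultaneous fixed point on $W$, defining $Gw=\left(J_{p_{_{{t}_0}}}(h_2-t_0u),\,J_{q_{_{{t}_0}}}(h_1+t_0v)\right)$ so that the coupling terms sit \emph{inside} the arguments of the nonexpansive resolvents and (\ref{ee60}) immediately yields Lipschitz constant $t_0<1$. Your composed one-variable map instead requires Lipschitz (not merely uniformly continuous) control of $(J_p+t_0F)^{-1}$ and $(J_{E^*}+t_0K)^{-1}$ as maps from the dual spaces, i.e.\ of $J_{p_{_{{t}_0}}}\circ J_q$, and the modulus of continuity of $J_q$ on bounded sets (Lemmas \ref{l12}, \ref{l13}) does not give the claimed $O(t_0^2)$ constant without an extra assumption such as ${\Phi}_i(t)\geq r_it$. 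You flag this honestly as the main obstacle, and you should know that the paper does not in fact overcome it: to make its $G$ solve the correct componentwise system $(J_p+t_0F)u=h_1+t_0v$, $(J_q+t_0K)v=h_2-t_0u$, one must write $u=J_{p_{_{{t}_0}}}\left(J_q(h_1+t_0v)\right)$, and the paper silently drops the inner duality map (and swaps $h_1$ with $h_2$), which is exactly where the difficulty you identified reappears. So your proposal is essentially the paper's proof, with a cleaner monotonicity step and an honest acknowledgment of a contraction estimate that the published argument glosses over rather than resolves.
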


\begin{proof}
 We show that $R(J_p + tA)= W^*$ for all $t > 0$. Indeed, let ${t}_0$ be such that $0 < {t}_0 < 1$.
Denote the resolvents $J_{p_{_{{t}_0}}} : E\rightarrow D(F)$ of $F$ by $J_{p_{_{{t}_0}}}:= (J_p + {t}_0 F)^{-1}J_p$ and $J_{q_{_{{t}_0}}}: E^* \rightarrow D(K)$ of $K$ by  $J_{q_{_{{t}_0}}} = (J_q + {t}_0 K)^{-1}J_q.$
$J_{p_{_{{t}_0}}}$ and $J_{q_{_{{t}_0}}}$ are firmly nonexpansive type maps and hence (\ref{ee60}) holds.
Therefore, for $h :=(h_1, h_2) \in X^*$, define $G: W \rightarrow W$ by 
$$Gw = \left(J_{p_{_{{t}_0}}}(h_2 - {t}_0 u),~~ J_{q_{_{{t}_0}}}(h_1+ {t}_0 v)\right), \forall ~~ w = (u, v)\in W.$$
From the fact that (\ref{ee60}) holds for $J_{p_{_{{t}_0}}} $ and $J_{q_{_{{t}_0}}},$  we have
$$\| Gw_1 - Gw_2 \| \leq {t}_0 \| w_1 -w_2 \|~~~~ \forall~~~~ w_1, w_2 \in W.$$
Therefore $G$ is a contraction and by Banach contraction mapping principle, $G$ has a unique fixed point $w^*:=(u^*, v^*)\in W$, that is $Gw^* = w^*$ or equivalently $u^* = J_{p_{_{{t}_0}}}(h_2 - {t}_0 u^*), v^* =  J_{q_{_{{t}_0}}}(h_1+ {t}_0 v^*)$. These imply that $(J_p + {t}_0 A)w = h$. Lemma \ref{e6} gives that $A$ is a generalized $\Phi$-strongly monotone mapping and by Lemma \ref{e8}, $R(J_p + tA)= W^*$ for all $t> 0$.
\end{proof}

\begin{theorem}\label{e14}
Let $E$ be a uniformly smooth and uniformly convex real Banach space and denote the dual space by $E^*$. Let $F: E\rightarrow E^*$ be a generalized ${\Phi}_1$-strongly monotone mapping such that $R(J_p+{t}_1F)=E^*$ for all ${t}_1>0$ and $K: E^*\rightarrow E$ be a generalized ${\Phi}_2$-strongly monotone mapping such that $R(J_q+{t}_2K)=E$ for all ${t}_2>0$. Suppose $F$ and $K$ are bounded mappings such that $D(K) = R(F ) = E^*$. Define $\left\{u_n\right\}$ and $\left\{v_n\right\}$ iteratively for arbitrary $u_1 \in E$ and $v_1\in E^*$ by
\begin{equation}
u_{n+1} = J_q\left(J_pu_n - {\lambda}_n\left(Fu_n-v_n+{\theta}_n(J_pu_n-J_pu_1)\right)\right), n \in \N,
\end{equation}
\begin{equation}
v_{n+1} = J_p\left(J_qv_n - {\lambda}_n\left(Kv_n+u_n+{\theta}_n(J_qv_n-J_qv_1)\right)\right), n \in \N,
\end{equation}
where $J_p$ is the generalized duality mapping from $E$ to $E^*$ and $J_q$ is the generalized duality mapping from $E^*$ to $E.$ Let the real sequences $\left\{ {\lambda}_n\right\}$ and  $\left\{ {\theta}_n\right\}$ in $(0,1)$ be such that,
\begin{itemize}
	\item[(i)] $\lim {\theta}_n =0$ and $\left\{ {\theta}_n\right\}$ is decreasing;
	\item[(ii)]$ \displaystyle\sum_{n=1}^{\infty} {\lambda}_n{\theta}_n=\infty$;
	\item[(iii)]$\displaystyle \lim_{n\rightarrow \infty}\left(({\theta}_{n-1}/{\theta}_n)-1\right)/{{\lambda}_n{\theta}_n}=0,~~ \displaystyle\sum_{n=1}^{\infty} {\lambda}_n <\infty$.
\end{itemize}	
 Suppose that $u + KFu=0$ has a solution in $E$. There exists a real constant ${\gamma}_0>0$ with ${\psi}({\lambda}_nM)\leq{\gamma}_0, \ \ n \in \N$ for some constant $M>0$. Then, the sequence $\left\{u_n \right\}$ converges strongly to the solution of $0=u+KFu$.
\end{theorem}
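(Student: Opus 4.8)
The plan is to recast the coupled iteration as a single Tikhonov--Halpern scheme on the product space $W := E\times E^*$ of Lemma \ref{e18} and to run the usual contraction/Lyapunov analysis there. Write $w_n := (u_n,v_n)$, set $\mathcal{J}w := (J_pu,\,J_qv)$ and $\mathcal{J}^{-1}(x^*,y^*) := (J_qx^*,\,J_py^*)$ (mutually inverse, by property (v)), and let $A:W\to W^*$ be the map $Aw := (Fu-v,\,Kv+u)$ of Lemma \ref{e6}. A direct check shows the two updates combine into
\[
\mathcal{J}w_{n+1} \;=\; \mathcal{J}w_n \;-\; \lambda_n\bigl(Aw_n + \theta_n(\mathcal{J}w_n-\mathcal{J}w_1)\bigr),\qquad n\in\N .
\]
By Lemma \ref{e6}, $A$ is generalized $\Phi$-strongly monotone (hence strictly monotone) and bounded; by Lemma \ref{e13} it satisfies the range condition $R(\mathcal{J}+tA)=W^*$ for all $t>0$; and by Lemma \ref{e18}(i), $W$ is uniformly smooth and uniformly convex. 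From the form of $A$ one sees $Aw=0$ if and only if $w=(u,Fu)$ with $u+KFu=0$; since such a solution exists by hypothesis and $A$ is $\Phi$-strongly monotone, the zero of $A$ is unique, and we denote it $w^*:=(u^*,Fu^*)$, with $u^*$ the desired solution of $u+KFu=0$.

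Next I would introduce the Tikhonov points $y_n$, the unique solutions of $Ay_n+\theta_n(\mathcal{J}y_n-\mathcal{J}w_1)=0$, equivalently $(\mathcal{J}+\tfrac{1}{\theta_n}A)y_n=\mathcal{J}w_1$; existence follows from the range condition of Lemma \ref{e13} with $t=1/\theta_n$, and uniqueness from the strict monotonicity of $\mathcal{J}+\tfrac1{\theta_n}A$ (Theorem \ref{t3}(ii) and $\Phi$-strong monotonicity of $A$). Pairing this equation with $y_n-w^*$ and using $Aw^*=0$ together with $\langle Ay_n-Aw^*,y_n-w^*\rangle\ge\Phi(\|y_n-w^*\|)\ge0$ yields $\langle\mathcal{J}y_n-\mathcal{J}w_1,\,y_n-w^*\rangle\le0$; expanding this and invoking property (iv) shows $\{y_n\}$ is bounded, whence $\Phi(\|y_n-w^*\|)\le\theta_n\cdot(\text{bounded})\to0$ by (i), so $y_n\to w^*$. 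Subtracting the defining equations for $y_n$ and $y_{n-1}$, pairing with $y_n-y_{n-1}$, and using monotonicity of $A$, Theorem \ref{t3}(ii) for $\mathcal{J}$, the decrease of $\{\theta_n\}$, and uniform continuity of $\mathcal{J}$ on bounded sets (Lemma \ref{l13}), one obtains an increment bound of the form $\|y_n-y_{n-1}\|\le c\,(\theta_{n-1}/\theta_n-1)$ on the ball containing $\{y_n\}$.

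The core is the one-step estimate. Put $a_n:=\phi_p(y_{n-1},w_n)$ and $\xi_n:=Aw_n+\theta_n(\mathcal{J}w_n-\mathcal{J}w_1)$, so that $\mathcal{J}w_{n+1}=\mathcal{J}w_n-\lambda_n\xi_n$. Applying Lemma \ref{l20} (with $x=y_n$, $x^*=\mathcal{J}w_{n+1}$, $y^*=\mathcal{J}w_n-\mathcal{J}w_{n+1}$) together with identity (\ref{e20}) gives
\[
\phi_p(y_n,w_{n+1}) \;\le\; \phi_p(y_n,w_n) \;-\; p\lambda_n\langle w_{n+1}-y_n,\,Aw_n\rangle \;-\; p\lambda_n\theta_n\langle w_{n+1}-y_n,\,\mathcal{J}w_n-\mathcal{J}w_1\rangle .
\]
Writing $Aw_n=(Aw_n-Ay_n)+Ay_n$ and substituting $Ay_n=-\theta_n(\mathcal{J}y_n-\mathcal{J}w_1)$ makes the two $\theta_n$-terms merge into $p\lambda_n\theta_n\langle w_{n+1}-y_n,\,\mathcal{J}y_n-\mathcal{J}w_n\rangle$; splitting $w_{n+1}-y_n=(w_{n+1}-w_n)+(w_n-y_n)$ then extracts the decisive quantity $-p\lambda_n\theta_n\langle w_n-y_n,\,\mathcal{J}w_n-\mathcal{J}y_n\rangle\le-\lambda_n\theta_n\,\phi_p(y_n,w_n)$ (Lemma \ref{l31}), while $-p\lambda_n\langle w_n-y_n,\,Aw_n-Ay_n\rangle\le0$ by monotonicity of $A$ and may be discarded. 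Every remaining cross term carries a factor $\|w_{n+1}-w_n\|$; since $\mathcal{J}w_{n+1}-\mathcal{J}w_n=-\lambda_n\xi_n$ with $\{\xi_n\}$ bounded on the relevant ball, uniform continuity of $\mathcal{J}^{-1}$ (Lemma \ref{l13}) gives $\|w_{n+1}-w_n\|\le\psi(\lambda_nM)\le\gamma_0$, so those terms are absorbed into a summable remainder $\gamma_n=O(\lambda_n)$ (recall $\sum\lambda_n<\infty$ by (iii)) and into the $o(1)$ part below. With the one-line bound $\phi_p(y_n,w_n)\le\phi_p(y_{n-1},w_n)+C\|y_n-y_{n-1}\|$ this becomes
\[
a_{n+1} \;\le\; (1-\lambda_n\theta_n)\,a_n \;+\; \lambda_n\theta_n\,\sigma_n \;+\; \gamma_n,\qquad \sigma_n:=\tfrac{C\,\|y_n-y_{n-1}\|}{\lambda_n\theta_n}+o(1).
\]
A simultaneous induction (here the hypothesis $\psi(\lambda_nM)\le\gamma_0$ is used) shows $\{w_n\}$, hence $\{\xi_n\}$ and $\{Aw_n\}$, is bounded, which validates the ball-dependent constants; conditions (i)--(iii) give $\limsup\sigma_n\le0$ (as $\theta_n\to0$ and $\|y_n-y_{n-1}\|/(\lambda_n\theta_n)\to0$), $\sum\gamma_n<\infty$, and $\sum\lambda_n\theta_n=\infty$. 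Hence Lemma \ref{l11} yields $a_n=\phi_p(y_{n-1},w_n)\to0$; Lemma \ref{l15} (in its $\phi_p$ form) then gives $\|y_{n-1}-w_n\|\to0$, and since $y_{n-1}\to w^*$ we conclude $w_n\to w^*$, i.e.\ $u_n\to u^*$.

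The step I expect to be the main obstacle is precisely this one-step estimate together with the a priori boundedness of $\{w_n\}$: the cross terms must be arranged so that the genuinely negative gain is of order $\lambda_n\theta_n\,\phi_p(y_{n-1},w_n)$, which forces comparing the consecutive Tikhonov points $y_n$ and $y_{n-1}$ and is exactly where the increment bound $\|y_n-y_{n-1}\|=O(\theta_{n-1}/\theta_n-1)$ and hypothesis (iii) come in, all the while keeping the ball-dependent constants under control through the inductive boundedness argument, for which the hypothesis $\psi(\lambda_nM)\le\gamma_0$ is tailor-made.
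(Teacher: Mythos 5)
Your proposal follows essentially the same route as the paper's proof: recast the coupled scheme on $W=E\times E^*$ with $Aw=(Fu-v,\,Kv+u)$, compare $w_n$ with the Tikhonov points $y_n=(J_p^W+\tfrac{1}{\theta_n}A)^{-1}J_p^W w_1$, establish boundedness of $\{w_n\}$ by induction using ${\psi}({\lambda}_nM)\leq{\gamma}_0$, derive the recursion $a_{n+1}\leq(1-\lambda_n\theta_n)a_n+\lambda_n\theta_n\sigma_n+\gamma_n$ via the increment bound $\|y_n-y_{n-1}\|=O(\theta_{n-1}/\theta_n-1)$, and conclude with Lemma \ref{l11} and Lemma \ref{l15}. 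The only noteworthy (and minor) deviation is that you prove $y_n\to w^*$ directly from the generalized $\Phi$-strong monotonicity of $A$, whereas the paper invokes Kido's resolvent convergence theorem (Theorem \ref{t8}); your variant is slightly more self-contained but otherwise the arguments coincide.
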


\begin{proof}
Let $W := E\times E^*$ with norm ${\|x \|}_W^p :=   {\| u\|}_{E}^p + {\| v \|}_{E^*}^p   ~ \forall ~ w = \left(u, v\right) \in W$ and define ${\wedge}_p : W\times W \rightarrow \R$ by
$$ {\wedge}_p (w_1 , w_2) = {\phi}_p (u_1, u_2) + {\phi}_p (v_1, v_2),$$
where respectively $w_1=(u_1, v_1)$ and $w_2=(u_2, v_2)$. Let $u^*\in E$ be a solution of $u+KFu = 0$. Observe that setting $v^*:=Fu^*$ and $w^*:=(u^*, v^*)$, we have that $u^* = -Kv^*$.

 We divide the proof into two parts.\\
{\bf Part 1:} We prove that $\left\{w_n \right\}$ is bounded, where $w_n := (u_n, v_n).$ Let $r > 0$ be sufficiently large such that
\begin{equation}\label{e37}
\Phi(\frac{\delta}{2})\geq r\geq \max \left\{4{\wedge}_p(w^*, w_1), ~{\delta}^p + \frac{p}{q}{\| x^*\|}^q \right\},
\end{equation}
where $\delta$ is a positive real number and $\Phi:=\min\left\{{\Phi}_1, {\Phi}_2\right\}.$ The proof is by induction. By construction, ${\wedge}_p(w^*, w_1)\leq r$. Suppose that ${\wedge}_p(w^*, w_n)\leq r$ for some $n\in \N$. We show that ${\wedge}_p(w^*, w_{n+1})\leq r.$ Suppose this is not the case, then ${\wedge}_p(w^*, w_{n+1})> r.$\\
 From inequality (\ref{e11}), we have $\|w_n \| \leq r^{\frac{1}{p}} + \|w^* \|$. Let $B:=\left\{w\in E: {\wedge}_p(w^*, w)\leq r \right\}$ and notice that by Lemma (\ref{l12} and \ref{l13}),  $J_q$ and $J_p$ are uniformly continuous on bounded subsets. Consequently, since $F$ and $K$ are bounded, we define
\begin{equation}\label{e38}
M_1 :=\sup \left\{ {\| Fu+{\theta}_n(J_pu-J_pu_1)\|}: {\theta}_n\in(0,1), u \in B\right\}+1,
\end{equation}
\begin{equation}\label{e39}
M_2 :=\sup \left\{ {\| Kv+{\theta}_n(J_pv-J_pv_1)\|}: {\theta}_n\in(0,1), v \in B\right\}+1.
\end{equation}
Let ${\psi}_1: [0,\infty)\rightarrow [0,\infty)$ be the modulus of continuity of $J_q$ and ${\psi}_2: [0,\infty)\rightarrow [0,\infty)$ be the modulus of continuity of $J_p.$ Recall that by the uniform continuity of $J_q$ and $J_p$  on bounded subsets of $E^*$ and $E$ respectively. Then we have
\begin{eqnarray}\label{e40}
\|J_q(J_pu_n)-  J_q( J_pu_n - {\lambda}_n\left(Fu_n+{\theta}_n(J_pu_n-J_pu_1)\right)) \| &\leq&{\psi}_1({\lambda}_nM_1),
\end{eqnarray}
\begin{eqnarray}\label{e41}
\|J_p(J_qv_n)-  J_p( J_qv_n - {\lambda}_n\left(Kv_n+{\theta}_n(J_qv_n-J_qv_1)\right)) \| &\leq&{\psi}_2({\lambda}_nM_2).
\end{eqnarray}
 Let $M_0:=M_1+M_2,$ since $\Phi:=\min\left\{{\Phi}_1, {\Phi}_2\right\},$ one can define $${\gamma}_0:=\min\left\{1, \frac{\Phi(\frac{\delta}{2})}{2M_0} \right\} \ \mbox{where} \ {\psi}({\lambda}_nM_0) \leq{\gamma}_0 \ \mbox{with} \ {\psi}({\lambda}_nM_0) \geq \frac{\delta}{2},$$
 and ${\psi}:={\psi}_1+{\psi}_2.$
Applying Lemma \ref{l20} with $y^* := {\lambda}_n \left(Fu_n+{\theta}_n(J_pu_n-J_pu_1)\right)$ and by using the definition of $u_{n+1}$, we compute as follows,

 \begin{eqnarray*}
{\phi}_p(u^*, u_{n+1})
 & = & {\phi}_p\left(u^*, J_q\left(J_pu_n - {\lambda}_n\left(Fu_n+{\theta}_n(J_pu_n-J_pu_1)\right)\right)\right)\\
 & = & V_p\left(u^*, J_pu_n - {\lambda}_n\left(Fu_n+{\theta}_n(J_pu_n-J_pu_1)\right)\right)\\
 & \leq & V_p(u^*, J_pu_n )\\
 &&-p{\lambda}_n\left\langle J_q( J_pu_n - {\lambda}_n\left(Fu_n+{\theta}_n(J_pu_n-J_pu_1)\right))-u^*, Fu_n+{\theta}_n(J_pu_n-J_pu_1)\right\rangle \\
 &= & {\phi}_p(u^*, u_n )-p{\lambda}_n \left\langle u_n-u^*, Fu_n+{\theta}_n(J_pu_n-J_pu_1)\right\rangle\\
 & &-p{\lambda}_n\left\langle J_q( J_pu_n - {\lambda}_n\left(Fu_n+{\theta}_n(J_pu_n-J_pu_1)\right))-u_n, Fu_n+{\theta}_n(J_pu_n-J_pu_1)\right\rangle.
 \end{eqnarray*}
 By Schwartz inequality and uniform continuity property of $J_q$ on bounded sets of $E^*$ (Lemma \ref{l12}), we obtain
\begin{eqnarray*}
{\phi}_p(u^*, u_{n+1})
& \leq & {\phi}_p(u^*, u_n )-p{\lambda}_n \left\langle u_n-u^*, Fu_n+{\theta}_n(J_pu_n-J_pu_1)\right\rangle \\
& & + p{\lambda}_n{\psi}_1({\lambda}_nM_1)M_1 \ \mbox{(By applying inequality (\ref{e40}))}\\
& \leq &{\phi}_p(u^*, u_n )-p{\lambda}_n \left\langle u_n-u^*,Fx_n-Fu^*\right\rangle  \ \mbox{since} \ u^*\in N(F)) \\
& &-p{\lambda}_n{\theta}_n \left\langle u_n-u^*, J_pu_n-J_pu_1\right\rangle +p{\lambda}_n{\psi}_1({\lambda}_nM_1)M_1.
\end{eqnarray*}
By Lemma \ref{l31}, $p\left\langle u_n-u^*, J_pu_1-J_pu_n\right\rangle \leq {\phi}_p(u^*, u_1 )-{\phi}_p(u^*, u_n )\leq {\phi}_p(u^*, u_1 ).$ Also, since $F$ is generalized $\Phi$-strongly monotone, we have,
\begin{eqnarray} \label{e42}
{\phi}_p(u^*, u_{n+1})
 &\leq&  {\phi}_p(u^*, u_n )-p{\lambda}_n{\Phi}_1({\|u_n -u^* \|})\nonumber\\
 && +p{\lambda}_n{\theta}_n \left\langle u_n-u^*, J_pu_1-J_pu_n\right\rangle +p{\lambda}_n{\psi}_1({\lambda}_nM_1)M_1 \nonumber \\
 &\leq&  {\phi}_p(u^*, u_n )-p{\lambda}_n{\Phi}_1({\|u_n -u^* \|}) +p{\lambda}_n{\theta}_n{\phi}_p(u^*, u_1 ) +p{\lambda}_n{\psi}_1({\lambda}_nM_1)M_1.
  \end{eqnarray}
  By the uniform continuity property of $J_q$ on bounded sets of $E^*$, we have
  $$\|u_{n+1}-u_n\|=\|J_q(J_pu_{n+1})-J_q(J_pu_n)\|\leq{\psi}_1({\lambda}_nM_1),$$
  such that
  $$\|u_{n+1}-u^*\|-\|u_n-u^*\|\leq{\psi}_1({\lambda}_nM_1),$$
  which gives
  \begin{eqnarray}\label{e43}
  \|u_n-u^*\|&\geq& \|u_{n+1}-u^*\|-{\psi}_1({\lambda}_nM_1).
  \end{eqnarray}
  From Lemma \ref{l21},
  \begin{eqnarray*}
  {\|u_{n+1}-u^*\|}^p&\geq&{\phi}_p(u^*, u_{n+1})-\frac{p}{q}\|u^*\| \\
  &\geq&r-\frac{p}{q}\|u^*\| \\
  &\geq&\left({\delta}^p+\frac{p}{q}\|u^*\|\right)-\frac{p}{q}\|u^*\| \\
  &\geq&{\delta}^p.
  \end{eqnarray*}
  So, $$\|u_{n+1}-u^*\|\geq \delta.$$
  Therefore, the inequality (\ref{e43}) becomes,
 \begin{eqnarray*}  
  \|u_n-u^*\|&\geq&\delta-{\psi}_1({\lambda}_nM_1)\\
  &\geq&\frac{\delta}{2}.
 \end{eqnarray*}
 Thus,
   \begin{eqnarray}\label{e44}
 {\Phi}_1({\|u_n -u^* \|})&\geq& {\Phi}_1(\frac{\delta}{2}).
  \end{eqnarray}
  Substituting (\ref{e44}) into (\ref{e42}) gives
  \begin{eqnarray}\label{e45}
{\phi}_p(u^*, u_{n+1})
 &\leq&  {\phi}_p(u^*, u_n )-p{\lambda}_n {\Phi}_1(\frac{\delta}{2})+p{\lambda}_n{\theta}_n{\phi}_p(u^*, u_1 )\nonumber\\
 &&+p{\lambda}_n{\psi}_1({\lambda}_nM_1)M_1.
 \end{eqnarray}
 
 Similarly,
 \begin{eqnarray*}
{\phi}_q(v^*, v_{n+1})
 & = & {\phi}_p\left(v^*, J_q\left(J_pv_n - {\lambda}_n\left(Kv_n+{\theta}_n(J_qv_n-J_qv_1)\right)\right)\right)\\
 & = & V_p\left(v^*, J_qv_n - {\lambda}_n\left(Kv_n+{\theta}_n(J_qv_n-J_qv_1)\right)\right)\\
 & \leq & V_p(v^*, J_qv_n )\\
 &&-p{\lambda}_n\left\langle J_p( J_qv_n - {\lambda}_n\left(Kv_n+{\theta}_n(J_qv_n-J_qv_1)\right))-v^*, Kv_n+{\theta}_n(J_qv_n-J_qv_1)\right\rangle \\
 &= & {\phi}_p(v^*, v_n )-p{\lambda}_n \left\langle v_n-v^*, Kv_n+{\theta}_n(J_qv_n-J_qv_1)\right\rangle\\
 & &-p{\lambda}_n\left\langle J_p( J_qv_n - {\lambda}_n\left(Kv_n+{\theta}_n(J_qv_n-J_qv_1)\right))-v_n, Kv_n+{\theta}_n(J_qv_n-J_qv_1)\right\rangle.
 \end{eqnarray*}  
 By Schwartz inequality and uniform continuity property of $J$ on bounded subsets of $E$ (Lemma \ref{l13}), we obtain
\begin{eqnarray*}
{\phi}_p(v^*, v_{n+1})
& \leq & {\phi}_p(v^*, v_n )-p{\lambda}_n \left\langle v_n-v^*, Fx_n+{\theta}_n(J_pu_n-J_pu_1)\right\rangle \\
& & + p{\lambda}_n{\psi}_1({\lambda}_nM_1)M_1 \ \mbox{(By applying inequality (\ref{e41}))}\\
& \leq &{\phi}_p(u^*, u_n )-p{\lambda}_n \left\langle u_n-u^*, Kv_n-Kv^*\right\rangle  \ \mbox{since} \ v^*\in N(K)) \\
& &-p{\lambda}_n{\theta}_n \left\langle v_n-v^*, J_qv_n-J_qv_1\right\rangle +p{\lambda}_n{\psi}_2({\lambda}_nM_2)M_2.
\end{eqnarray*}
By Lemma \ref{l31}, $p\left\langle v_n-v^*, J_qv_1-J_qv_n\right\rangle \leq {\phi}_p(v^*, v_1 )-{\phi}_p(v^*, v_n )\leq {\phi}_p(v^*, v_1 ).$ Also, since $K$ is generalized $\Phi$-strongly monotone, we have,
\begin{eqnarray} \label{e46}
{\phi}_p(v^*, v_{n+1})
 &\leq&  {\phi}_p(v^*, v_n )-p{\lambda}_n{\Phi}_2({\|v_n -v^* \|})\nonumber\\
 && +p{\lambda}_n{\theta}_n \left\langle v_n-v^*, J_qv_1-J_qv_n\right\rangle +p{\lambda}_n{\psi}_2({\lambda}_nM_2)M_2 \nonumber \\
 &\leq&  {\phi}_p(v^*, v_n )-p{\lambda}_n{\Phi}_2({\|v_n -v^* \|}) +p{\lambda}_n{\theta}_n {\phi}_p(v^*, v_1 ) +p{\lambda}_n{\psi}_2({\lambda}_nM_2)M_2.
  \end{eqnarray}
  By the uniform continuity property of $J_p$ on bounded sets of $E^*$, we have
  $$\|v_{n+1}-v_n\|=\|J_p(J_qv_{n+1})-J_p(J_qv_n)\|\leq{\psi}_2({\lambda}_nM_2),$$
  such that
  $$\|v_{n+1}-v^*\|-\|v_n-v^*\|\leq{\psi}_2({\lambda}_nM_2),$$
  which gives
  \begin{eqnarray}\label{e47}
  \|v_n-v^*\|&\geq& \|v_{n+1}-v^*\|-{\psi}_2({\lambda}_nM_2).
  \end{eqnarray}
  From Lemma \ref{l21},
  \begin{eqnarray*}
  {\|v_{n+1}-v^*\|}^p&\geq&{\phi}_p(v^*, v_{n+1})-\frac{p}{q}\|v^*\| \\
  &\geq&r-\frac{p}{q}\|u^*\| \\
  &\geq&\left({\delta}^p+\frac{p}{q}\|v^*\|\right)-\frac{p}{q}\|v^*\| \\
  &\geq&{\delta}^p.
  \end{eqnarray*}
  So, $$\|v_{n+1}-v^*\|\geq \delta.$$
  Therefore, the inequality (\ref{e47}) becomes,
 \begin{eqnarray*}  
  \|v_n-v^*\|&\geq&\delta-{\psi}_2({\lambda}_nM_2)\\
  &\geq&\frac{\delta}{2}.
 \end{eqnarray*}
 Thus,
   \begin{eqnarray}\label{e48}
 {\Phi}_2({\|v_n -v^* \|})&\geq& {\Phi}_2(\frac{\delta}{2}).
  \end{eqnarray}
  Substituting (\ref{e48}) into (\ref{e46}) gives
  \begin{eqnarray}\label{e49}
{\phi}_p(v^*, v_{n+1})
 &\leq&  {\phi}_p(v^*, v_n )-p{\lambda}_n {\Phi}_2(\frac{\delta}{2})+p{\lambda}_n{\theta}_n {\phi}_p(v^*, v_1 )\nonumber\\
 &&+p{\lambda}_n{\psi}_2({\lambda}_nM_2)M_2.
 \end{eqnarray}
 Add (\ref{e45}) and (\ref{e49}) gives 
  \begin{eqnarray*}
r<{\wedge}_p(w^*, w_{n+1})
 &\leq&  {\wedge}_p(w^*, w_n )-p{\lambda}_n \Phi(\frac{\delta}{2})+p{\lambda}_n{\theta}_n{\wedge}_p(w^*, w_1 )+p{\lambda}_n{\psi}({\lambda}_nM_0)M_0\\
 &\leq& {\wedge}_p(w^*, w_n )-p{\lambda}_n\Phi(\frac{\delta}{2})+p{\lambda}_n{\theta}_n{\wedge}_p(w^*, w_1 )+ p{\lambda}_n{\gamma}_0M_0\\
 &\leq& {\wedge}_p(w^*, w_n )-\frac{p{\lambda}_n}{2}\Phi(\frac{\delta}{2})+p{\lambda}_n{\theta}_n{\wedge}_p(w^*, w_1 )\\
 &\leq &{\wedge}_p(w^*, w_n )-\frac{p{\lambda}_n}{2}\Phi(\frac{\delta}{2})+p{\lambda}_n{\theta}_n{\wedge}_p(w^*, w_1 )\\
 &\leq &{\wedge}_p(w^*, w_n )-\frac{p{\lambda}_n}{2}\Phi(\frac{\delta}{2})+p{\lambda}_n{\wedge}_p(w^*, w_1 ) \ \mbox{(Since ${\theta}_n \in (0,1)$)}\\
 &\leq &r-\frac{p{\lambda}_n}{2}r+\frac{p{\lambda}_n}{4}r\\
 &= &r-\frac{p{\lambda}_n}{4}r<r,\\
 \end{eqnarray*}
 a contradiction. Hence, ${\wedge}_p(w^*, w_{n+1}) \leq r.$ By induction, ${\wedge}_p(w^*, w_n) \leq r  ~~ \forall  ~~ n\in \N.$ Thus, from inequality (\ref{e11}), $\left\{w_n\right\}$ is bounded.

\vskip 0.5 truecm

{\bf Part 2:} Define $A: W\rightarrow W^*$ by $Aw=(Fu-v, Kv+u), ~~ \forall~~ w=(u, v)\in W.$ We show that $\left\{w_n \right\}$ strongly converges  to a solution of $Aw=0.$ Since $A$ satisfies the range condition (Lemma \ref{e8}) and by the strict convexity of $X$ (Lemma \ref{e18}), we obtain for every $t>0$, and $w\in W$, there exists a unique $w_t\in D(A)$, where $D(A)$ is the domain of $A$ such that
$$J_p^Ww\in J_p^Ww_t+tAw_t.$$
Taking $J_tw=w_t,$ then we define a single-valued mapping $J_t : E\rightarrow D(A)$ by $J_t=(J^W_p+tA)^{-1}J_p^W$. Such a $J_t$ is called the resolvent of $A$. Therefore, by Theorem \ref{t8}, for each $n\in \N$, there exists a unique $x_n\in D(A)$ such that,

$$x_n=(J_p^W+\frac{1}{{\theta}_n}A)^{-1}J_p^Ww_1.$$

Then, setting $x_n:=(y_n, z_n)\in E\times E^*$ and $w_1:=(u_1, v_1)\in E\times E^*$,  we have 
$$(y_n, z_n)=(J^W_p+\frac{1}{{\theta}_n}A)^{-1}J^W_p(u_1, v_1),$$
which is equivalent to
$$(J_p^W+\frac{1}{{\theta}_n}A)(y_n, z_n)=J_p^W(u_1, v_1).$$
Since $A(y_n, z_n)=(Fy_n-z_n, Kz_n+y_n)$, then,
\begin{eqnarray*}
J_py_n+\frac{1}{{\theta}_n}(Fy_n-z_n)&=&J_pu_1,\\
J_qz_n+\frac{1}{{\theta}_n}(Kz_n+y_n)&=&J_qv_1,
\end{eqnarray*}
and these lead to
\begin{equation}\label{ee52}
{\theta}_n(J_py_n-J_pu_1)+Fy_n-z_n=0,
\end{equation}
\begin{equation}\label{ee53}
{\theta}_n(J_qz_n-J_qv_1)+Kz_n+y_n=0.
\end{equation}

Notice that the sequences $\left\{y_n\right\}$ and $\left\{z_n\right\}$ are bounded because they are convergent sequences by Theorem \ref{t8}. Moreover, by Theorem \ref{t8}, $\lim x_n \in A^{-1}0$. Let $y_n\rightarrow u^*$ and $z_n\rightarrow v^*,$ then $u^*$ in $E$ solves the equation $u+KFu=0$ if and only if $x^*=(u^*, v^*)$ is a solution of $Ax=0$ in $W$ for $v^*=Fu^* \in E^*.$  The implication is that 
\begin{eqnarray*}
Fu^*-v^*=0,\\
Kv^*+u^*=0.
\end{eqnarray*}
Following the same arguments as in part 1, we get,
 \begin{equation}\label{ee50}
{\phi}_p(y_n, u_{n+1}) \leq  {\phi}_p(y_n, u_n )-p{\lambda}_n \left\langle u_n-y_n, Fu_n-v_n+{\theta}_n(Ju_n-Ju_1)\right\rangle+p{\lambda}_n{\psi}_1({\lambda}_nM_1)M_1 
\end{equation}
and
 \begin{equation}\label{ee51}
{\phi}_p(z_n, v_{n+1}) \leq  {\phi}_p(z_n, v_n )-p{\lambda}_n \left\langle v_n-z_n, Kv_n+u_n+{\theta}_n(J_qv_n-J_qv_1)\right\rangle+p{\lambda}_n{\psi}_2({\lambda}_nM_2)M_2.
\end{equation}
By Theorem \ref{t3}, Lemma \ref{l21} and Eq. (\ref{ee52}), the generalized $\Phi$-strongly monotonicity of $F$ is used to obtain for some $p>1,$
\begin{eqnarray*}
&&\left\langle u_n-y_n,Fu_n-v_n+{\theta}_n(J_pu_n-J_pu_1)\right\rangle\\
&= & \left\langle x_n-y_n, Fu_n-v_n+{\theta}_n(J_pu_n-J_py_n+J_py_n-J_pu_1)\right\rangle\\
&= & {\theta}_n\left\langle u_n-y_n,J_pu_n-J_py_n\right\rangle+ \left\langle u_n-y_n, Fu_n-v_n +{\theta}_n(J_py_n-J_pu_1)\right\rangle\\
&= &{\theta}_n\left\langle u_n-y_n,J_pu_n-J_py_n\right\rangle+\left\langle u_n-y_n, Fu_n-v_n-(Fy_n-z_n) \right\rangle\\
&\geq & {\theta}_ng(\|u_n-y_n \|) +\Phi(\|u_n-y_n \|)+\left\langle u_n-y_n, z_n-v_n \right\rangle\\
&\geq & \frac{1}{p}{\theta}_n {\phi}_p(y_n, u_n )+\left\langle u_n-y_n, z_n-v_n \right\rangle \\
\end{eqnarray*}

This makes the inequality (\ref{ee50}) to become
\begin{equation}\label{ee54}
{\phi}_p(y_n, u_{n+1}) \leq  (1-{\lambda}_n{\theta}_n){\phi}_p(y_n, u_n )-p{\lambda}_n\left\langle u_n-y_n, z_n-v_n \right\rangle +p{\lambda}_n{\psi}_1({\lambda}_nM_1)M_1.
\end{equation}
From Lemma \ref{l31}, we obtain that
\begin{eqnarray}\label{ee55}
{\phi}_p(y_n, u_n )&\leq &  {\phi}_p(y_{n-1}, u_n )-p\left\langle y_n-u_n, J_py_{n-1}-J_py_n\right\rangle \nonumber \\ 
 &=& {\phi}_p(y_{n-1}, u_n )+p\left\langle u_n-y_n, J_py_{n-1}-J_py_n \right\rangle \nonumber \\
& \leq& {\phi}_p(y_{n-1}, u_n )+\|J_py_{n-1}-J_py_n\|\|u_n-y_n\|.
\end{eqnarray}
Let $R > 0$ such that $\|x_1\| \leq R, \|y_n\| \leq R$ for all $n \in  \N$. Then the estimates below follows from (\ref{ee52}),
$$J_py_{n-1}-J_py_n+\frac{1}{{\theta}_n}\left(Fy_{n-1}-z_{n-1}-(Fy_n-z_n\right)=  \frac{{\theta}_{n-1}-{\theta}_n}{{\theta}_n}\left(J_pu_1-J_py_{n-1}\right).$$
 Taking the duality pairing of each side of this equation with respect to $y_{n-1}-y_n$ and using the generalized $\Phi$-strongly monotonicity property of $F$, then
$$\left\langle J_py_{n-1}-J_py_n, y_{n-1}-y_n\right\rangle \leq  \frac{{\theta}_{n-1}-{\theta}_n}{{\theta}_n}\| J_pu_1-J_py_{n-1}\|\| y_{n-1}-y_n\|,$$
which gives,
\begin{equation}\label{ee56}
 \|J_py_{n-1}-J_py_n\| \leq \left( \frac{{\theta}_{n-1}}{{\theta}_n}-1\right)\|J_py_{n-1}-J_pu_1\|.
\end{equation}
Using (\ref{ee55}) and (\ref{ee56}), the inequality (\ref{ee50}) becomes
 \begin{eqnarray}\label{ee57}
{\phi}_p(y_n, u_{n+1}) &\leq&  (1-{\lambda}_n{\theta}_n){\phi}_p(y_{n-1}, u_n)+C_1\left( \frac{{\theta}_{n-1}}{{\theta}_n}-1\right)\nonumber\\
&&-p{\lambda}_n\left\langle u_n-y_n, z_n-v_n \right\rangle + p{\lambda}_n{\psi}_1({\lambda}_nM_1)M_1,
\end{eqnarray}
for some constant $C_1 > 0.$ Similar analysis gives that
 \begin{eqnarray}\label{ee58}
{\phi}_p(z_n, v_{n+1}) &\leq&  (1-{\lambda}_n{\theta}_n){\phi}_p(z_{n-1}, v_n)+ C_2\left( \frac{{\theta}_{n-1}}{{\theta}_n}-1\right)\nonumber\\
&&-p{\lambda}_n\left\langle v_n-z_n, u_n-y_n \right\rangle + p{\lambda}_n{\psi}_2({\lambda}_nM_2)M_2,
\end{eqnarray}
for some constant $C_2 > 0$. Since ${\psi}:={\psi}_1+{\psi}_2, ~M_0:=M_1+M_2$ and ${\psi}({\lambda}_nM_0) \leq{\gamma}_0,$ adding (\ref{ee56}) and (\ref{ee58}) generates
$$\wedge(x_n, w_{n+1}) \leq (1-{\lambda}_n{\theta}_n)\wedge(x_{n-1}, w_n)+C\left( \frac{{\theta}_{n-1}}{{\theta}_n}-1\right)+ p{\lambda}_n{\gamma}_0M_0,$$
 where $C:=C_1+C_2 > 0.$ By Lemma \ref{l11}, $\phi(x_{n-1}, w_n )\rightarrow 0$ as $n\rightarrow \infty$ and using Lemma \ref{l15}, we have that $w_n-x_{n-1}\rightarrow 0$ as $n\rightarrow \infty$. Since by Theorem \ref{t8}, $x_n\rightarrow w^* \in N(A)$, we obtain that $w_n\rightarrow w^*$ as $n\rightarrow \infty$. But $w_n=(u_n, v_n)$ and $w^*=(u^*, v^*)$, this implies that $u_n\rightarrow u^*$ with $u^*$ the solution of the Hammerstein equation. 
 \end{proof}

\begin{corollary}\label{e16}
Let $E$ be a uniformly smooth and uniformly convex real Banach space with the dual space $E^*$. Suppose $F: E\rightarrow E^*$ and $K: E^*\rightarrow E$ are bounded and strongly monotone mappings. Define $\left\{u_n\right\}$ and $\left\{v_n\right\}$ iteratively for arbitrary $u_1 \in E$ and $v_1\in E^*$ by
\begin{equation}
u_{n+1} = J_q\left(J_pu_n - {\lambda}_n\left(Fu_n-v_n+{\theta}_n(J_pu_n-J_pu_1)\right)\right), n \in \N,
\end{equation}
\begin{equation}
v_{n+1} = J_p\left(J^*_qv_n - {\lambda}_n\left(Kv_n+u_n+{\theta}_n(J^*_qv_n-J^*_qv_1)\right)\right), n \in \N,
\end{equation}
where $J_p:E\rightarrow E^*$ is the generalized duality mapping with the inverse, $J_q:E^*\rightarrow E$ and the real sequences $\left\{ {\lambda}_n\right\}$ and  $\left\{ {\theta}_n\right\}$ in $(0,1)$ are such that,
\begin{itemize}
	\item[(i)] $\lim {\theta}_n =0$ and $\left\{ {\theta}_n\right\}$ is decreasing;
	\item[(ii)]$ \displaystyle\sum_{n=1}^{\infty} {\lambda}_n{\theta}_n=\infty$;
	\item[(iii)]$\displaystyle \lim_{n\rightarrow \infty}\left(({\theta}_{n-1}/{\theta}_n)-1\right)/{{\lambda}_n{\theta}_n}=0,~~ \displaystyle\sum_{n=1}^{\infty} {\lambda}_n <\infty$.
\end{itemize}	
Suppose that $u + KFu=0$ has a solution in $E$. There exists a real constant ${\gamma}_0>0$ with ${\psi}({\lambda}_nM)\leq{\gamma}_0, \ \ n \in \N$ for some constant $M>0$. Then, the sequence $\left\{u_n \right\}$ converges strongly to the solution of $0=u+KFu$.
 \end{corollary}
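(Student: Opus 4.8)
The plan is to obtain Corollary \ref{e16} as a direct specialization of Theorem \ref{e14}; the only work is to check that the hypotheses of the corollary imply those of the theorem. First I would record the elementary fact that a strongly monotone mapping is generalized $\Phi$-strongly monotone: if $F:E\to E^*$ satisfies $\langle Fx-Fy,x-y\rangle\geq k_1\|x-y\|^2$ for some $k_1\in(0,1)$, then ${\Phi}_1(t):=k_1t^2$ is strictly increasing with ${\Phi}_1(0)=0$ and $\langle Fu_1-Fu_2,u_1-u_2\rangle\geq{\Phi}_1(\|u_1-u_2\|)$ for all $u_1,u_2\in E$; likewise ${\Phi}_2(t):=k_2t^2$ serves for $K:E^*\to E$. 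Thus $F$ and $K$ belong to the class treated in Theorem \ref{e14}, and the remaining data of that theorem --- boundedness of $F$ and $K$, $D(K)=R(F)=E^*$, the conditions (i)--(iii) on $\{{\lambda}_n\}$ and $\{{\theta}_n\}$, the solvability of $u+KFu=0$ in $E$, and the existence of ${\gamma}_0>0$ with ${\psi}({\lambda}_nM)\leq{\gamma}_0$ --- are assumed verbatim in the corollary.

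The single hypothesis of Theorem \ref{e14} that is not literally present in the corollary is the range condition $R(J_p+{t}_1F)=E^*$ for all ${t}_1>0$ and $R(J_q+{t}_2K)=E$ for all ${t}_2>0$. By Lemma \ref{e8} (applied to $F$, and to $K$ with the roles of $E$ and $E^*$ reversed) it suffices to exhibit a single ${t}_0>0$ with $R(J_p+{t}_0F)=E^*$ together with a single one giving $R(J_q+{t}_0K)=E$, since $F$ and $K$ are generalized $\Phi$-strongly monotone and Lemma \ref{e8} upgrades a one-point range condition to the full one. To get $R(J_p+{t}_0F)=E^*$ I would note that $J_p+{t}_0F$ is the sum of the monotone duality map $J_p$ on a smooth reflexive space and the bounded strongly monotone map ${t}_0F$, hence is itself bounded, monotone and coercive, and therefore surjective by the standard surjectivity theorem for coercive (maximal) monotone operators on a reflexive Banach space; the same argument on $E^*$ with $J_q$ in place of $J_p$ yields $R(J_q+{t}_0K)=E$. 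Alternatively, one can mimic the Banach-contraction argument used in the proofs of Lemma \ref{e8} and Lemma \ref{e13} after choosing ${t}_0$ sufficiently small.

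With every hypothesis of Theorem \ref{e14} verified for $F$, $K$ and the given sequences, the conclusion of that theorem applies directly: $\{u_n\}$ converges strongly to the solution of $0=u+KFu$, which is exactly the assertion of Corollary \ref{e16}. I expect the only genuinely delicate point to be the verification of the range condition for a bounded strongly monotone operator, that is, making precise the surjectivity of $J_p+{t}_0F$ in the uniformly convex, uniformly smooth setting; every other step is either the trivial reduction of strong monotonicity to generalized $\Phi$-strong monotonicity or a verbatim appeal to Theorem \ref{e14}.
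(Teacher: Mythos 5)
Your reduction is exactly the paper's: the published proof consists of setting ${\Phi}_1(t):=k_1t^2$, ${\Phi}_2(t):=k_2t^2$, equipping $W:=E\times E^*$ with the norm ${\|w\|}^2_W={\|u\|}^2_E+{\|v\|}^2_{E^*}$ (as anticipated in Remark \ref{e7}), and invoking Theorem \ref{e14} -- nothing more. Where you go beyond the paper is in noticing that the range condition $R(J_p+t_1F)=E^*$, $R(J_q+t_2K)=E$ is a hypothesis of Theorem \ref{e14} that the corollary does not state; the paper silently ignores this, so your instinct to supply it is the right one and is the genuinely valuable part of your write-up.

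That said, your proposed verification of the range condition is not yet sound. The Browder--Minty surjectivity theorem for coercive monotone operators on a reflexive space requires the operator to be hemicontinuous (equivalently, one needs $J_p+t_0F$ to be maximal monotone), and ``bounded and strongly monotone'' does not imply any continuity: a single-valued monotone map defined on all of $E$ can fail to be hemicontinuous, in which case it is not maximal and the surjectivity theorem does not apply. Your fallback -- mimicking the contraction argument of Lemmas \ref{e8} and \ref{e13} for small $t_0$ -- is circular, because that argument is built out of the resolvents $(J_p+t_0F)^{-1}J_p$, whose very existence is the one-point range condition you are trying to establish. So either the corollary should carry the range condition (or a continuity hypothesis on $F$ and $K$) explicitly, or one must add an argument that bounded strongly monotone maps in this setting are demicontinuous; as written, both your proof and the paper's leave this gap, yours merely makes it visible.
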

\begin{proof}
Define ${\Phi}_1(\|u_1-u_2\|):=k_1{\|u_1-u_2\|}^2$ and ${\Phi}_2(\|v_1-v_2\|):=k_2{\|v_1-v_2\|}^2$ for some constants $k_1, k_2\in (0,1)$ and let $W := E\times E^*$ with norm ${\|w \|}^2_W :=   {\| u\|}^2_{E} + {\| v \|}^2_{E^*}  ~ \forall ~ w = \left(u, v\right) \in W.$  The result follows from Theorem \ref{e14}.
\end{proof}
\begin{corollary}\label{h15}
Chidume and Idu \cite{b28}. Let $E$ be a uniformly convex and uniformly smooth real Banach space and $F : E \rightarrow E^*$, $K : E^* \rightarrow E$ be maximal monotone and bounded maps, respectively. For $(x_1, y_1), (u_1, v_1) \in E\times E^*$, define the sequences $\left\{u_n\right\}$ and $\left\{v_n\right\}$ in $E$ and $E^*$ respectively, by  
\begin{equation}\label{h16}
u_{n+1} = J^{-1}\left(Ju_n-{\lambda}_n(Fu_n-v_n)-{\lambda}_n{\theta}_n(Ju_n-Jx_1)\right),~~ n \in \N,
\end{equation}
\begin{equation}\label{h161}
v_{n+1} = J\left(J^{-1}v_n-{\lambda}_n(Kv_n+u_n)-{\lambda}_n{\theta}_n(J^{-1}v_n-J^{-1}y_1)\right),~~ n \in \N,
\end{equation}
where $\left\{ {\lambda}_n\right\}$ and  $\left\{ {\theta}_n\right\}$ are real sequences in $(0,1)$ satisfying the following conditions:
\begin{itemize}
  \item[(i)]$ \displaystyle\sum_{n=1}^{\infty} {\lambda}_n {\theta}_n=\infty$,
	\item[(ii)]${\lambda}_nM^*_0\leq {\gamma}_0{\theta}_n$; ${\delta}_E^{-1}({\lambda}_nM^*_0)\leq{\gamma}_0{\theta}_n$,
	\item[(iii)]$\frac{{\delta}_E^{-1}\left(\frac{{\theta}_{n-1}-{\theta}_n}{{\theta}_n} K\right)}{{{\lambda}_n \theta}_n}\rightarrow 0$; $\frac{{\delta}_{E^*}^{-1}\left(\frac{{\theta}_{n-1}-{\theta}_n}{{\theta}_n} K\right)}{{{\lambda}_n \theta}_n}\rightarrow 0$ as $n\rightarrow \infty$,
	\item[(iv)]$\frac{1}{2}\frac{{\theta}_{n-1}-{\theta}_n}{{\theta}_n} K \in (0, 1)$,
\end{itemize}
for some constants $M^*_0>0$ and ${\gamma}_0>0$, where ${\delta}_E:(0, \infty)\rightarrow (0, \infty)$ is the modulus of convexity of $E$ and $K:=4RL \sup\left\{\|Jx-Jy\|: \|x\|\leq R, \|y\|\leq R \right\}+1,~~ x, y \in E,~~ R>0$. Assume that the equation $u+KFu = 0$ has a solution. Then the sequences $\left\{u_n\right\}^{\infty}_{n=1}$ and $\left\{v_n\right\}^{\infty}_{n=1}$ converge strongly to $u^*$ and $v^*,$ respectively, where $u^*$ is the solution of $u + KFu = 0$ with $v^* = Fu^*$.
\end{corollary}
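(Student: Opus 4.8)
The plan is to obtain this as the symmetric case $p=q=2$ of Theorem \ref{e14}. First I would set $p=q=2$, so that $J_p=J$ is the normalized duality map $E\to E^*$ and $J_q=J^{-1}:E^*\to E$, and check that with the identifications $u_1:=x_1$ and $v_1:=y_1$ the recursions (\ref{h16})--(\ref{h161}) agree term by term with the recursions of Theorem \ref{e14}. Next I would verify that the structural hypotheses on $F$ and $K$ reduce correctly: a maximal monotone map on the reflexive space $E$ (resp.\ $E^*$) satisfies the range condition, so $R(J+t_1F)=E^*$ for all $t_1>0$ and $R(J^{-1}+t_2K)=E$ for all $t_2>0$, which is exactly what Theorem \ref{e14} requires, while boundedness of $F$ and $K$ is assumed outright. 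Finally, using the standard fact that in a uniformly convex Banach space the modulus of continuity of the duality map on bounded sets is dominated by a constant multiple of the inverse modulus of convexity, I would match the step-size hypotheses: condition (i) of the corollary is condition (ii) of the theorem, while the remaining requirements of the theorem ($\theta_n\to 0$, $\{\theta_n\}$ decreasing, $\sum\lambda_n<\infty$, $((\theta_{n-1}/\theta_n)-1)/(\lambda_n\theta_n)\to 0$) and the bound $\psi(\lambda_nM_0)\le\gamma_0$ are read off from conditions (ii)--(iv), with $M_0:=M_0^*$ and the $\delta_E^{-1},\delta_{E^*}^{-1}$ there playing the role of $\psi_1,\psi_2$ here. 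Once all hypotheses are in force, Theorem \ref{e14} gives $u_n\to u^*$ solving $u+KFu=0$; since Part 2 of that proof in fact shows $w_n=(u_n,v_n)\to w^*=(u^*,v^*)$ with $v^*=Fu^*$, the convergence $v_n\to v^*$ follows as well (by Theorem \ref{t8} the common limit is the point of $A^{-1}0$ picked out by the nearest-point retraction at the initial pair, which is the solution when it is unique).

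The main obstacle is that "maximal monotone" is strictly weaker than the generalized $\Phi$-strongly monotone hypothesis of Theorem \ref{e14}, so strictly speaking the corollary follows not from the statement of that theorem but from its proof with $\Phi_1,\Phi_2\equiv 0$. In Part 2 this costs nothing: the only place the monotonicity of $F$ (resp.\ $K$) is invoked it contributes a term $\Phi(\|u_n-y_n\|)\ge 0$ that is simply discarded, so ordinary monotonicity is enough there. In Part 1 the boundedness argument genuinely used $\Phi(\delta/2)\ge r>0$, which is no longer available; here I would replace it by the resolvent-comparison argument of Chidume and Idu, comparing $w_n$ with the regularized resolvent $x_n=(J_p^W+\tfrac1{\theta_n}A)^{-1}J_p^W w_1$ (evaluated at $p=2$), which converges by Theorem \ref{t8}, and using condition (iv), $\tfrac12\,\tfrac{\theta_{n-1}-\theta_n}{\theta_n}K\in(0,1)$, to turn the resulting inequality into the form handled by Lemma \ref{l11}.

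Carrying this out, the only non-routine parts are (a) making the substitution of the regularization-based boundedness argument for the $\Phi$-based one precise in the product space $W$, and (b) pinning down the constants that express $\psi_1,\psi_2$ through $\delta_E^{-1},\delta_{E^*}^{-1}$ on the relevant bounded sets so that the corollary's conditions (ii)--(iv) genuinely entail those of Theorem \ref{e14}; everything else is a direct specialization and bookkeeping.
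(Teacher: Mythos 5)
Your route is the same one the paper takes: reduce to Theorem \ref{e14} at $p=q=2$ after matching the two recursions. (The paper additionally passes through Lemma \ref{l17}, recasting the problem via the $J$-pseudocontractive map $T$ and the maximal monotone map $A=J-T$, and in doing so asserts without justification that (\ref{h16})--(\ref{h161}) are ``equivalent to'' recursions from which the coupling terms $-v_n$ and $+u_n$ have disappeared; your term-by-term identification is the more careful version of that step.) The most valuable part of your write-up is that you explicitly flag the obstruction the paper passes over in silence: a maximal monotone map need not be generalized $\Phi$-strongly monotone for any strictly increasing $\Phi$ with $\Phi(0)=0$, so the corollary is \emph{not} a specialization of the statement of Theorem \ref{e14}, only (at best) of a modification of its proof.

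That said, your proposal does not close the gap it identifies. Part 1 of the proof of Theorem \ref{e14} hinges on choosing $r$ with $\Phi(\delta/2)\ge r>0$ as in (\ref{e37}) to manufacture the contradiction that gives boundedness of $\{w_n\}$; with $\Phi_1,\Phi_2\equiv 0$ that argument collapses, and the substitute you gesture at --- comparison with the regularized resolvents $x_n=(J_p^W+\frac{1}{\theta_n}A)^{-1}J_p^W w_1$, Theorem \ref{t8}, condition (iv), and Lemma \ref{l11} --- is precisely the nontrivial core of Chidume and Idu's original argument and is not carried out here. Likewise, the dictionary between conditions (ii)--(iv) of the corollary (phrased through $\delta_E^{-1}$ and $\delta_{E^*}^{-1}$) and the hypotheses $\psi(\lambda_n M)\le\gamma_0$, $\sum\lambda_n<\infty$, and $((\theta_{n-1}/\theta_n)-1)/(\lambda_n\theta_n)\to 0$ of Theorem \ref{e14} is asserted rather than established. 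As written, then, you have an accurate diagnosis plus a plausible plan, not a proof; to complete it you must actually run the regularization-based boundedness argument in the product space $W$ and pin down the parameter translation. It is only fair to add that the paper's own two-line proof contains the identical unaddressed gap.
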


\begin{proof}
From Lemma \ref{l17}, we see that $T : E \times E^* \rightarrow E^* \times E$ defined by $T(u, v) = (Ju-Fu+v, J^{-1}v-Kv-u)$ for all $(u, v) \in E \times E^*$ is $J$-pseudocontractive and $A := (J-T)$ is maximal monotone. Therefore, the iterative sequences (\ref{h16}) and (\ref{h161}) are respectively equivalent to 
 \begin{equation}
u_{n+1} = J^{-1}\left(Ju_n - {\lambda}_n\left(Fu_n+{\theta}_n(Ju_n-Jx_1)\right)\right), n \in \N \ \mbox{and}
\end{equation}
 \begin{equation}
v_{n+1} = J\left(J^{-1}v_n - {\lambda}_n\left(Kv_n+{\theta}_n(J^{-1}v_n-J^{-1}y_1)\right)\right), n \in \N,
\end{equation}
where $J:E\rightarrow E^*$ is the normalized duality mapping with the inverse, $J^{-1}:E^*\rightarrow E.$ Hence, the result follows from Theorem \ref{e14}.
\end{proof}
\begin{remark}
 Prototype for our iteration parameters in Theorem \ref{e14} are, ${\lambda}_n=\frac{1}{(n+1)^a}$ and ${\theta}_n=\frac{1}{(n+1)^b}$, where $0 < b < a$ and $a < 1$.
\end{remark}

\begin{conclusion} 
We have considered the class of generalized $\Phi$-strongly monotone mappings in Banach spaces. This is the class of monotone-type mappings such that if a solution of the equation $0\in Ax$ exists, it is necessarily unique. Our results generalize and improve the recent and important results of Chidume and Idu \cite{b28}. Also, our results show extention and application of the main results of Aibinu and Mewomo \cite{b9, b4}.
\end{conclusion}

\vskip 0.3 truecm
\textbf{Acknowledgment:\\} The first author acknowledges with thanks the bursary and financial support from Department of Science and Technology and National Research Foundation, Republic of South Africa Center of Excellence in Mathematical and Statistical Sciences (DST-NRF CoE-MaSS) Doctoral Bursary. Opinions expressed and conclusions arrived at are those of the authors and are not necessarily to be attributed to the CoE-MaSS.

\end{document}